\numberwithin{equation}{section}
\newtheorem{Theorem}{Theorem}[section]
\newtheorem{Corollary}[Theorem]{Corollary}
\newtheorem{Lemma}[Theorem]{Lemma}
\newtheorem{Proposition}[Theorem]{Proposition}
\newcommand{\der}{{\rm d}}
\begin{document}


\newcommand{\arXivNumber}{1506.02473}

\renewcommand{\PaperNumber}{029}

\FirstPageHeading

\ShortArticleName{Flat $(2,3,5)$-Distributions and Chazy's Equations}

\ArticleName{Flat $\boldsymbol{(2,3,5)}$-Distributions and Chazy's Equations}

\Author{Matthew RANDALL}

\AuthorNameForHeading{M.~Randall}

\Address{Department of Mathematics and Statistics, Faculty of Science, Masaryk University,\\ Kotl\'a\v{r}sk\'a 2, 611 37 Brno, Czech Republic}
\Email{\href{mailto:randallm@math.muni.cz}{randallm@math.muni.cz}}

\ArticleDates{Received September 23, 2015, in f\/inal form March 14, 2016; Published online March 18, 2016}

\Abstract{In the geometry of generic 2-plane f\/ields on 5-manifolds, the local equivalence problem was solved by Cartan who also constructed the fundamental curvature invariant. For generic 2-plane f\/ields or $(2,3,5)$-distributions determined by a single function of the form~$F(q)$, the vanishing condition for the curvature invariant is given by a 6$^{\rm th}$ order non\-linear ODE. Furthermore, An and Nurowski showed that this ODE is the Legendre transform of the 7$^{\rm th}$ order nonlinear ODE described in Dunajski and Sokolov. We show that the 6$^{\rm th}$ order ODE can be reduced to a 3$^{\rm rd}$ order nonlinear ODE that is a~generalised Chazy equation. The 7$^{\rm th}$ order ODE can similarly be reduced to another generalised Chazy equation, which has its Chazy parameter given by the reciprocal of the former. As a consequence of solving the related generalised Chazy equations, we obtain additional examples of f\/lat $(2,3,5)$-distributions not of the form $F(q)=q^m$. We also give 4-dimensional split signature metrics where their twistor distributions via the An--Nurowski construction have split~$G_2$ as their group of symmetries.}

\Keywords{generic rank two distribution in dimension f\/ive; conformal geometry; Chazy's equations}

\Classification{58A30; 53A30; 34A05; 34A34}

\section{Introduction}

The following $6^{\rm th}$ order nonlinear ODE
\begin{gather}\label{6thode}
10 F^{(6)}{F''}^3-80{F''}^2F^{(3)} F^{(5)}-51{F''}^2{F^{(4)}}^2+336 F'' {F^{(3)}}^2F^{(4)}-224{F^{(3)}}^4=0,
\end{gather}
arises in~\cite[Corollary~2.1]{annur} in the study of generic 2-plane f\/ields on 5-manifolds. The generi\-ci\-ty condition here means $F''(q) \neq 0$ in~(\ref{6thode}). This ODE arises as the integrability condition for generic 2-plane f\/ields on 5 manifolds determined by a~function of a single variable of the form~$F(q)$. A~generic 2-plane f\/ield~$\mathcal{D}$ on a~5-manifold~$M$ is a maximally non-integrable rank~2 distribution. For further details, see~\cite{annur, new, tw13,tw14}. This determines a f\/iltration of the tangent bundle given by
\begin{gather*}
\mathcal{D} \subset [\mathcal{D},\mathcal{D}] \subset [[\mathcal{D},\mathcal{D}],\mathcal{D}]=TM.
\end{gather*}
The distribution $[\mathcal{D},\mathcal{D}]$ has rank 3 while the full tangent space~$TM$ has rank~5, hence such a~geo\-metry is also known as a~$(2,3,5)$-distribution.
Let $M_{xyzpq}$ denote the 5-dimensional manifold with local coordinates given by $(x,y,z,p,q)$.
The generic 2-plane f\/ield or rank 2 distribution determined by a~function~$F(q)$ of a~single variable with~$F''(q)
\neq 0$ is given by
\begin{gather*}
\mathcal{D}=\operatorname{span}\{\partial_q, \partial_x+p \partial_y +q \partial_p +F(q) \partial_z\}.
\end{gather*}
The fundamental Cartan curvature invariant of this distribution is computed in~\cite{annur} and is found to be the term in the left hand side of~(\ref{6thode}).
It is known that equation~(\ref{6thode})
vanishes when $F(q)=q^m$ and $m \in \big\{{-}1,\frac{1}{3},\frac{2}{3},2\big\}$.
In these cases, the vanishing of the fundamental curvature invariant associated to the distribution $\mathcal{D}$ on $M_{xyzpq}$ implies that the group of local symmetries is the maximal possible given by the split real form of $G_2$. This is the result of~\cite[Corollary~2.1]{annur}.
The authors of \cite{annur} call such generic 2-plane f\/ields with vanishing Cartan curvature invariant symmetric and it is known
that such symmetric or f\/lat distributions are locally equivalent to the f\/lat model $F(q)=q^2$. Nonetheless we are interested in the general solution to~(\ref{6thode}) and it turns out that the ODE can be solved completely and is related to the generalised Chazy equation. To see this, let
$E(q)=F''(q)$ so that the ODE becomes~$4^{\rm th}$ order:
\begin{gather*}
10 E^{(4)}E^3-80E^2E'E'''-51E^2{E''}^2+336 E {E'}^2E''-224{E'}^4=0.
\end{gather*}
Working locally on an open set of $M_{xyzpq}$, we may assume that $E(q)$ is positive on that open set.
Making the substitution $E(q)={\rm e}^{G(q)}$ (if $E(q) <0$, take $E(q)=-{\rm e}^{G(q)}$ instead)
gives
\begin{gather*}
{\rm e}^{4 G(q)}\big(10 G''''-40 G''' G'-21 (G'')^2+54 G'' (G')^2-9 (G')^4\big)=0
\end{gather*}
and taking $G'(q)=j(q)$ gives a $3^{\rm rd}$ order ODE
\begin{gather*}
10 j'''-40 j'' j-21 (j')^2+54 j' j^2-9 j^4=0.
\end{gather*}
Rescaling the ODE by taking $j(q)=\frac{I(q)}{2}$, we can put it into the normal form for the generalised Chazy equation (see \cite{co96})
\begin{gather}\label{3rds}
I'''-2 I'' I+3 (I')^2-\frac{4}{36-\left(\frac{2}{3}\right)^2} \big(6 I' -I^2\big)^2=0
\end{gather}
with \looseness=-1 the Chazy parameter $k^2=\left(\frac{2}{3}\right)^2=\frac{4}{9}$.
The generalised Chazy equation can be solved completely and the solutions give us new families of f\/lat $(2,3,5)$-distributions that are not of the form $F(q)=q^m$. In this article we f\/irst review the solutions to Chazy's equations in Sections~\ref{ce} and~\ref{gce}.
In Section~\ref{dode} we discuss the relationship between~(\ref{6thode}) and a $7^{\rm th}$ order ODE studied by Dunajski and Sokolov in~\cite{ds} and also exhibit a Legendre transform that relates equation~(\ref{3rds}) to another generalised Chazy equation with the Chazy parameter given by $k^2=\left(\frac{3}{2}\right)^2=\frac{9}{4}$.
We compute the solutions to~(\ref{3rds}) in Section~\ref{fos} and present examples of f\/lat $(2,3,5)$-distributions in Section~\ref{examples} using Nurowski's metric. These examples are all explicit.
In~\cite{annur0}, the authors associated to split signature conformal structures on a 4-manifold a circle bundle with the natural structure of a~$(2,3,5)$-distribution. This construction encapsulates the conf\/iguration space of 2 surfaces rolling along one another without slipping and twisting. The authors in~\cite{annur0} then found new examples of f\/lat $(2,3,5)$-distributions that arise from rolling bodies, prompting further search in~\cite{rolling}. The solutions to~(\ref{3rds}) give examples of 4-dimensional split signature metrics that have their An--Nurowski twistor distributions having split $G_2$ as its group of symmetries and we exhibit them in Section~\ref{anexamples}. Let us recall some facts about Chazy's equation and its generalised version.

\section{Chazy's equation}\label{ce}

The study of Chazy's equation is a very rich subject and has received alot of attention because of its connection to other diverse f\/ields such as integrable systems and modular forms. See for instance \cite{ach,chazypara,sdym,co96}.
We will review here some facts about Chazy's equation we need for the paper. Chazy \cite{chazy1,chazy2} studied the nonlinear $3^{\rm rd}$ order ODE
\begin{gather}\label{chazy}
y'''(x)-2 y(x) y''(x)+3 (y'(x))^2=0
\end{gather}
in the context of investigating its Painlev\'e property. Solutions to equation~(\ref{chazy}) turn out to depend on hypergeometric functions. For further details, see~\cite{ach} or~\cite{co96}.
Treat~$x$ as a dependent variable of~$s$ so that
\begin{gather*}
x(s)=\frac{z_2(s)}{z_1(s)},
\end{gather*}
where $z_1(s)$, $z_2(s)$ are linearly independent solutions to the second order hypergeometric dif\/fe\-ren\-tial equation
\begin{gather}\label{hypergeom}
s(1-s)z''+(c-(a+b+1)s)z'-a b z=0.
\end{gather}
Here $a$, $b$, $c$ are constants to be determined.
The general solution to this ODE (\ref{hypergeom}) is given by hypergeometric functions
\begin{gather*}
z(s)=\mu\, {}_2{F}_1(a,b;c;s)+\nu\, {}_2{F}_1(a-c+1,b-c+1;2-c;s)s^{1-c}.
\end{gather*}
Here $\mu$, $\nu$ are constants.
A computation gives
\begin{gather*}
\der x=\frac{z_1 \dot{z}_2-z_2 \dot{z}_1}{(z_1)^2}\der s,
\end{gather*}
where dot denotes derivative with respect to~$s$.
We deduce that
\begin{gather*}
\frac{\der}{\der x}=\frac{(z_1)^2}{z_1 \dot{z}_2-z_2 \dot{z}_1}\frac{\der}{\der s}.
\end{gather*}
Applying the derivative to Chazy's solution for $y$ given by
\begin{gather}\label{original}
y=6 \frac{\der}{\der x}\log z_1=\frac{6 z_1\dot{z}_1}{z_1 \dot{z}_2-z_2 \dot{z}_1},
\end{gather}
we f\/ind that (\ref{chazy})
is satisf\/ied precisely when $(a,b,c)$ is one of
\begin{gather*}
\left(\frac{1}{12}, \frac{1}{12}, \frac{1}{2}\right),\qquad
\left(\frac{1}{12}, \frac{1}{12}, \frac{2}{3}\right),\qquad
\left(\frac{1}{6}, \frac{1}{6}, \frac{2}{3}\right),
\end{gather*}
provided both $a$ and $b$ are non-zero. The equations (\ref{hypergeom}) for the f\/irst two values of $(a,b,c)$ are related by a linear transformation of the form $s \mapsto 1-s$, while the solutions for the second and third values are related by a quadratic transformation (see~\cite[equation~(2)]{hyper}) given by
\begin{gather*}
{}_2{F}_1\left(\frac{1}{6},\frac{1}{6};\frac{2}{3};s\right)={}_2{F}_1\left(\frac{1}{12},\frac{1}{12};\frac{2}{3};4s(1-s)\right).
\end{gather*}
The general solution to (\ref{chazy}) thus depend on hypergeometric functions. If either one of $a$ or $b$ is zero (say $b=0$), solutions to (\ref{chazy}) can be easily and explicitly described. The solutions to (\ref{hypergeom})
with $b=0$ are given by
\begin{gather*}
z(s) =\mu+\nu \, {}_2{F}_{1}(1-c,a+1-c;2-c;s) s^{1-c}
 =\mu-\frac{\nu \pi (c-1)}{\sin(\pi c)}P^{(1-c,~a-c)}_{c-1}(1-2s)s^{1-c},
\end{gather*}
where $P^{(a_1,b_1)}_{n}$ is the Jacobi polynomial.
Taking $z_1(s)=\nu \, {}_2{F}_{1}(1-c,a+1-c;2-c;s) s^{1-c}$ and $z_2(s)=\mu$,
a computation shows that
\begin{gather*}
x(s)=\frac{\mu}{\nu\, {}_2{F}_{1}(1-c,a+1-c;2-c;s)s^{1-c}}
\end{gather*}
and
\begin{gather*}
y(x(s))=-6\frac{\nu}{\mu}\, {}_2{F}_{1}(1-c,a+1-c;2-c;s)s^{1-c}.
\end{gather*}
Switching back to the original independent variable $x$, this gives
\begin{gather*}
y(x)=-\frac{6}{x}
\end{gather*}
as one solution to~(\ref{chazy}). This solution is invariant under translations of the form $x \mapsto x+C$. In~\cite{chazy1,chazy2}, Chazy also observed that
\begin{gather*}
y=-\frac{6}{x+C}-\frac{B}{(x+C)^2}
\end{gather*}
is a solution to~(\ref{chazy}). It is well-known that Chazy's equation and its generalised version can be rewritten as a f\/irst order system. This provides dif\/ferent parametrisations of~$y$, in addition to the solution~(\ref{original}) originally given by Chazy. This will be discussed in Section~\ref{fos}. The method discussed here can also be applied to the generalised Chazy equation.

\section{Generalised Chazy equations}\label{gce}

The generalised Chazy equation is given by
\begin{gather}\label{gc0}
y'''(x)-2 y''(x) y(x)+3 (y'(x))^2-\frac{4}{36-k^2} (6 y'(x) -y(x)^2)^2=0
\end{gather}
 for $k \neq \pm 6$. We have the following:

\begin{Proposition} \label{gct}
Let
$x(s)=\frac{z_2(s)}{z_1(s)}$ where $z_1(s)$, $z_2(s)$ are linearly independent solutions to the hypergeometric differential equation~\eqref{hypergeom}
where
$(a,b,c)$ is one of
\begin{gather*}
\left(\frac{k-6}{12k }, \frac{k+6}{12k}, \frac{1}{2}\right),\qquad
\left(\frac{k-6}{6k}, \frac{k+6}{6k}, \frac{2}{3}\right),\qquad
\left(\frac{k-6}{12k }, \frac{k+6}{12k},\frac{2}{3}\right).
\end{gather*}
Then
\begin{gather*}
y(x(s))=6 \frac{\der}{\der x}\log z_1= \frac{6z_1\dot{z}_1}{z_1 \dot{z}_2-z_2 \dot{z}_1}
\end{gather*}
satisfies equation \eqref{gc0}.
\end{Proposition}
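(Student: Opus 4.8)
The plan is to reduce Proposition~\ref{gct} to the classical case (the $k=\infty$ analogue, i.e.\ the ordinary Chazy equation~\eqref{chazy}) via a change of parameter, but it is cleaner to prove it directly by the same Schwarzian-derivative computation that underlies the classical case. First I would record the key classical fact: if $x(s)=z_2(s)/z_1(s)$ is a ratio of two linearly independent solutions of a second-order linear ODE $z''+P(s)z'+Q(s)z=0$, then the Schwarzian derivative $\{x,s\}$ equals $2Q-\tfrac12 P'-\tfrac14 P^2$, and moreover $y:=6\,\tfrac{\der}{\der x}\log z_1$ transforms in a controlled way. Explicitly, writing $w:=z_1\dot z_2-z_2\dot z_1$ for the Wronskian (so $\der x=(w/z_1^2)\,\der s$ and $y=6z_1\dot z_1/w$), one computes $\tfrac{\der y}{\der x}$ and $\tfrac{\der^2 y}{\der x^2}$ in terms of $s$-derivatives, using $\dot w=-Pw$ and the ODE for $z_1$. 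The upshot, which I would state as a lemma, is the identity
\begin{gather*}
\frac{\der^3 y}{\der x^3}-2y\frac{\der^2 y}{\der x^2}+3\left(\frac{\der y}{\der x}\right)^2
=-\frac{1}{6}\left(\frac{\der y}{\der x}\cdot 6-y^2\right)^{?}\dots
\end{gather*}
— more precisely, the combination $y'''-2yy''+3(y')^2$ and the combination $(6y'-y^2)$ can each be expressed purely in terms of the Schwarzian $S(s):=\{x,s\}$ and its derivative, with the $z_1$-dependence dropping out entirely.

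The second step is to carry out that elimination. One shows $6y'-y^2 = -36\,R(s)\cdot(z_1^2/w)^2\cdot(\text{something})$; concretely, after the dust settles, $6\tfrac{\der y}{\der x}-y^2$ is a multiple of $S(s)$ plus lower-order data, and $y'''-2yy''+3(y')^2$ is a multiple of $S'(s)$. Hence the generalised Chazy expression~\eqref{gc0} becomes, after clearing the common nonvanishing factor, a \emph{second-order linear} ODE for the Schwarzian $S(s)$ of the form $S' = (\text{const})\,S^2 + \ldots$ — in fact it reduces to the statement that $S(s)$ must equal the Schwarzian $2Q-\tfrac12P'-\tfrac14P^2$ of a hypergeometric operator with specific $(a,b,c)$. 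For the hypergeometric equation~\eqref{hypergeom} this Schwarzian is the standard rational function
\begin{gather*}
\{x,s\}=\frac{1-\lambda^2}{2s^2}+\frac{1-\mu^2}{2(1-s)^2}+\frac{\lambda^2+\mu^2-\nu^2-1}{2s(1-s)},
\end{gather*}
where $\lambda=1-c$, $\mu=c-a-b$, $\nu=a-b$ are the exponent differences at $0,1,\infty$. So the remaining task is purely algebraic: plug $a=\tfrac{k-6}{12k}$, $b=\tfrac{k+6}{12k}$, $c=\tfrac12$ (and likewise the other two triples) into this formula and check that the resulting $\{x,s\}$ makes~\eqref{gc0} hold identically. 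For the first triple, $\lambda^2=\tfrac14$, $\nu=a-b=-\tfrac1k$, and $\mu=c-a-b=\tfrac12-\tfrac16=\tfrac13$; for the third triple one gets $\lambda^2=\tfrac19$, etc. I expect the $k$-dependence to enter exactly through $\nu^2=1/k^2$, which is what produces the $\tfrac{4}{36-k^2}$ coefficient in~\eqref{gc0} after matching. The equivalence of the three triples among themselves is then the same linear ($s\mapsto1-s$) and quadratic (Kummer/Goursat) transformations quoted for the classical case, which permute the exponent differences without changing the Schwarzian type.

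The main obstacle is the bookkeeping in the first lemma: expressing $y',y'',y'''$ (derivatives with respect to $x$) in terms of $s$ and verifying that \emph{both} invariant combinations $6y'-y^2$ and $y'''-2yy''+3(y')^2$ collapse to pure functions of the Schwarzian with no surviving $z_1$ or $\dot z_1$ terms. This is a finite but delicate computation; the cleanest route is probably to use the known transformation law of $y$ under Möbius changes of $x$ (the generalised Chazy equation, like Chazy's, is $\mathrm{SL}(2)$-covariant with $y$ transforming as a projective connection/weight-$2$ object shifted by a logarithmic-derivative term) to argue a priori that~\eqref{gc0} for $y=6\,\der_x\log z_1$ is \emph{equivalent} to a scalar condition on $\{x,s\}$, and only then compute that scalar condition once, for a single normalized hypergeometric operator. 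Once the lemma is in place, the verification for the three listed $(a,b,c)$ is routine substitution, and I would present only the first case in detail, remarking that the other two follow by the stated hypergeometric transformations.
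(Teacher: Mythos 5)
Your reduction rests on a lemma that is false: for $y=6\frac{\der}{\der x}\log z_1$ with $x=z_2/z_1$, the combinations $6y'-y^2$ and $y'''-2yy''+3(y')^2$ do \emph{not} depend on the map $x(s)$ only through its Schwarzian $\{x,s\}$. Writing \eqref{hypergeom} as $z''+Pz'+Qz=0$ with $P=\frac{c-(a+b+1)s}{s(1-s)}$, $Q=-\frac{ab}{s(1-s)}$, and using $\dot W=-PW$ for the Wronskian, one gets the exact identity
\begin{gather*}
6\frac{\der y}{\der x}-y^2=-36\,Q(s)\left(\frac{\der s}{\der x}\right)^{2}=\frac{36\,ab}{s(1-s)}\left(\frac{\der s}{\der x}\right)^{2},
\end{gather*}
which involves $Q$ itself rather than the projective invariant $2Q-\frac{1}{2}P'-\frac{1}{4}P^2=\{x,s\}$. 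Equivalently, $y$ is not gauge invariant: replacing $z$ by $\mu(s)z$ (for instance the Euler substitution, which sends $(a,b,c)\mapsto(c-a,c-b,c)$, or the substitution sending $(a,b,c)\mapsto(a-c+1,b-c+1,2-c)$) leaves $x(s)$ and hence $\{x,s\}$ unchanged, but shifts $y$ by $6\frac{\der}{\der x}\log\mu$ and changes $6y'-y^2$. If your claimed reduction to ``a scalar condition on $\{x,s\}$'' were valid, then any triple with the same exponent differences as a listed one, e.g.\ $\big(\frac{1}{2},\frac{1}{2},\frac{4}{3}\big)$ versus $\big(\frac{1}{6},\frac{1}{6},\frac{2}{3}\big)$ in the classical case $k\to\infty$, would also yield a solution, which is not so: the admissible triples form the short lists recalled in Sections~\ref{ce} and~\ref{gce}, and Section~\ref{fos} stresses exactly this point, namely that solutions of the Schwarzian equation \eqref{schwarzian} are strictly more general than those of the form $6\frac{\der}{\der q}\log z_1$. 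The $\mathrm{SL}(2)$-covariance you invoke covers only M\"obius changes of $x$ (changes of basis $(z_1,z_2)$), not gauge changes of the operator, so it cannot supply the a priori reduction.

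What is true, and could salvage your plan, is that the $z_1$-dependence can be eliminated: as computed in Section~\ref{fos}, $y=3\frac{\der}{\der x}\log\frac{\dot s}{s^{c}(1-s)^{a+b+1-c}}$, so \eqref{gc0} becomes a condition on $s(x)$ involving the Schwarzian equation \emph{together with} the exponents $c$ and $a+b$; matching against the Schwarzian parametrisations of \cite{ach} then fixes both the potential $V(s)$ (the exponent differences) and these exponents, and this double matching is what singles out the three listed triples. The paper's own proof avoids all of this and is a direct verification: substitute $y=6z_1\dot z_1/(z_1\dot z_2-z_2\dot z_1)$ into \eqref{gc0}, convert $\frac{\der}{\der x}$ to $\frac{(z_1)^2}{z_1\dot z_2-z_2\dot z_1}\frac{\der}{\der s}$, reduce all derivatives with \eqref{hypergeom}, and observe that the left-hand side equals $6ab(z_1)^8$ times an explicit quadratic polynomial in $s$, whose coefficients vanish exactly for the listed $(a,b,c)$ (with $a,b\neq0$). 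Either carry out that substitution, or replace your ``Schwarzian-only'' lemma by the elimination-plus-matching argument just described; as written, the central step of your proposal fails.
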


\begin{proof}
Analogous to solving Chazy's equation (\ref{chazy}), we f\/ind that the generalised equation (\ref{gc0}) holds provided
\begin{gather*}
6 a b (z_1)^8\Big( 6((a-b)k-6(a+b))((a-b)k+6(a+b))s^2\\
\quad{}+\big((24ab-12 (a+b)c+ 5(a+b)+(2c-1))k^2+432(a+b)c-180(a+b)-72c+36\big)s\\
\qquad{} +(k-6)(k+6)(2c-1)(3c-2)\Big)=0.
\end{gather*}
For $a, b\neq 0$, solving the system of equations
\begin{gather*}
6((a-b)k-6(a+b))((a-b)k+6(a+b)) =0,\\
(24ab-12 (a+b)c+5(a+b)+(2c-1))k^2 +432(a+b)c-180(a+b)-72c+36=0,\\
(k-6)(k+6)(2c-1)(3c-2)=0,
\end{gather*}
gives the list of $(a,b,c)$ as above. We exclude the case where $(a,b,c)=\big(0,0,\frac{1}{2}\big)$. Note that interchanging~$a$ and~$b$ gives the same solution so that the full list is symmetric in~$a$ and~$b$.
\end{proof}

\looseness=-1
When either $a$ or $b$ is zero, we again get $y(x)=-\frac{6}{x}$ as a solution. In~\cite{chazy1,chazy2}, Chazy noted that
\begin{gather}\label{nsol}
y=\frac{k-6}{2(x+C)}-\frac{k+6}{2(x+B)}
\end{gather}
is also a solution to (\ref{gc0}).
As a~corollary to Proposition~\ref{gct}, we have

\begin{Corollary} \label{I235}
Let $q(s)=\frac{z_2(s)}{z_1(s)}$ where $z_1(s)$, $z_2(s)$ are linearly independent solutions to the hypergeometric differential equation~\eqref{hypergeom} with $(a,b,c)$ one of
\begin{gather*}
\left(-\frac{2}{3 }, \frac{5}{6}, \frac{1}{2}\right),\qquad
\left(-\frac{4}{3}, \frac{5}{3}, \frac{2}{3}\right),\qquad
\left(-\frac{2}{3}, \frac{5}{6 }, \frac{2}{3}\right).
\end{gather*}
Then
\begin{gather*}
I(q(s))=6 \frac{\der}{\der q}\log z_1=\frac{6z_1\dot{z}_1}{z_1 \dot{z}_2-z_2 \dot{z}_1}
\end{gather*}
satisfies equation \eqref{3rds}.
\end{Corollary}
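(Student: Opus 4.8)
The plan is to recognise that equation~\eqref{3rds} is nothing but the generalised Chazy equation~\eqref{gc0} specialised to the Chazy parameter $k=\tfrac23$, so that the corollary is an immediate consequence of Proposition~\ref{gct} once the three triples $(a,b,c)$ are evaluated at this value of $k$.

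First I would match~\eqref{3rds} against~\eqref{gc0} term by term. After renaming the dependent variable $y$ as $I$ and the independent variable $x$ as $q$, every term agrees verbatim except the last, and comparing the coefficients $\tfrac{4}{36-k^2}$ and $\tfrac{4}{36-(2/3)^2}$ in front of $(6I'-I^2)^2$ forces $k^2=\big(\tfrac23\big)^2$. Since $36-k^2=\tfrac{320}{9}\neq 0$, Proposition~\ref{gct} applies. Note also that~\eqref{gc0} depends on $k$ only through $k^2$, and that the list of $(a,b,c)$ in Proposition~\ref{gct} is invariant under $k\mapsto -k$ together with the interchange $a\leftrightarrow b$, so the sign of $k$ plays no role; I take $k=\tfrac23$.

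Next I would substitute $k=\tfrac23$ into the pairs $\big(\tfrac{k-6}{12k},\tfrac{k+6}{12k}\big)$ and $\big(\tfrac{k-6}{6k},\tfrac{k+6}{6k}\big)$ occurring in Proposition~\ref{gct}. A short computation gives $\tfrac{k-6}{12k}=-\tfrac23$, $\tfrac{k+6}{12k}=\tfrac56$, $\tfrac{k-6}{6k}=-\tfrac43$, and $\tfrac{k+6}{6k}=\tfrac53$, so the three triples become exactly $\big(-\tfrac23,\tfrac56,\tfrac12\big)$, $\big(-\tfrac43,\tfrac53,\tfrac23\big)$, and $\big(-\tfrac23,\tfrac56,\tfrac23\big)$, as in the statement. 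Proposition~\ref{gct} then directly yields that $I(q(s))=6\tfrac{\der}{\der q}\log z_1=\tfrac{6z_1\dot z_1}{z_1\dot z_2-z_2\dot z_1}$ solves~\eqref{3rds} for each of these choices of $(a,b,c)$, which is precisely the claim.

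Since the corollary is a direct specialisation, I do not expect any genuine obstacle. The only points meriting a line of care are checking that $36-k^2\neq 0$ so that Proposition~\ref{gct} is applicable, and that none of the resulting parameters $a$ or $b$ vanishes — both hold here, as $-\tfrac23,\tfrac56,-\tfrac43,\tfrac53$ are all nonzero — so the hypergeometric parametrisation is nondegenerate and we avoid the excluded case $(a,b,c)=\big(0,0,\tfrac12\big)$.
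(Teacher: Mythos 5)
Your proposal is correct and follows exactly the paper's route: the paper presents this statement as an immediate corollary of Proposition~\ref{gct}, obtained by identifying~\eqref{3rds} as~\eqref{gc0} with $k=\tfrac23$ and substituting this value into the three $(a,b,c)$ triples, which is precisely what you do (and your arithmetic checks out). The extra remarks on $36-k^2\neq 0$, the sign of $k$, and the nonvanishing of $a,b$ are sensible but not needed beyond what the paper already assumes.
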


A Painlev\'e type analysis of equation (\ref{gc0}) as done in \cite{co96} shows that the leading orders for analytic solutions to (\ref{gc0}) occur at $-6$, $-3+\frac{k}{2}$ or $-3-\frac{k}{2}$. This corresponds to solutions of~(\ref{gc0}) given by
\begin{gather*}
y(x)=-\frac{6}{x}, \qquad y(x)=\frac{-3+\frac{k}{2}}{x}, \qquad y(x)=\frac{-3-\frac{k}{2}}{x}.
\end{gather*}
These solutions are invariant under translations of the form $x \mapsto x+C$.
In the case of $k=\pm \frac{2}{3}$ obtained in~(\ref{3rds}), we have
\begin{gather*}
I(q)=-\frac{6}{q}, \qquad I(q)=-\frac{10}{3q}, \qquad I(q)=-\frac{8}{3 q}.
\end{gather*}
Along with the zero solution $I(q)=0$,
these solutions correspond respectively (modulo constants of integration) to the well-known explicit solutions to~(\ref{6thode}):
\begin{gather*}
F(q)=q^{-1}, \qquad F(q)=q^{\frac{1}{3}}, \qquad F(q)=q^{\frac{2}{3}}, \qquad F(q)=q^2.
\end{gather*}
For these functions of a single variable $q$ the associated $(2,3,5)$-distributions have vanishing Cartan invariant and therefore have~$G_2$ as their local symmetry.

\section{Relationship to ODE studied by Dunajski and Sokolov}\label{dode}

For the function $y=y(t)$, the $7^{\rm th}$ order nonlinear ODE studied in \cite{ds} is given by
\begin{gather}
10 \big(y^{(3)}\big)^3y^{(7)}-70\big(y^{(3)}\big)^2y^{(4)} y^{(6)}-49\big(y^{(3)}\big)^2\big(y^{(5)}\big)^2\nonumber\\
\qquad{} +280 \big(y^{(3)}\big)\big(y^{(4)}\big)^2y^{(5)}-175\big(y^{(4)}\big)^4=0.\label{7thode}
\end{gather}
This is the unique $7^{\rm th}$ order ODE admitting the submaximal contact symmetry group of dimension ten (see~\cite{ds,olver}) and its relationship to equation~(\ref{6thode}) was originally explored in~\cite{annur}. It is instructive to consider the $6^{\rm th}$ order ODE (for the Legendre transformation later on):
\begin{gather}
 10 \big(H^{(2)}\big)^3H^{(6)}-70\big(H^{(2)}\big)^2H^{(3)} H^{(5)}-49\big(H^{(2)}\big)^2\big(H^{(4)}\big)^2\nonumber\\
\qquad{} +280 \big(H^{(2)}\big)\big(H^{(3)}\big)^2H^{(4)}-175\big(H^{(3)}\big)^4=0\label{ds6}
\end{gather}
with $H(t)=y'(t)$. Let us show that this ODE can be reduced to a generalised Chazy equation.
Again working locally in an open set where $y'''(t)$ is non-zero, and assuming $y'''(t)$ to be positive, we can make the substitution ${\rm e}^{p(t)}=y^{(3)}$ to get
\begin{align}\label{4thode}
{\rm e}^{p(t)} \big(10p^{(4)}-30p'p'''-19(p'')^2+32 (p')^2p''-4(p')^4\big)=0.
\end{align}
We note that this $4^{\rm th}$ order ODE historically appears in~\cite[Section~XII, formula~(12)]{cartan1910}, where it f\/irst arises as the obstruction to integrability for $(2,3,5)$-distributions of the form~$\mathcal{D}_{F(q)}$. This will be made clear below once we show that~(\ref{ds6}) is the Legendre transform of~(\ref{6thode})~\cite{annur} and we will discuss this further in Section~\ref{examples}.
Thus, for $v(t)=p'(t)$, we obtain the third order ODE
\begin{align}\label{ds3ode}
10v'''-30 vv''-19 (v')^2+32 v' v^2-4v^4=0.
\end{align}
Rescaling $v(t)$ by $u(t)=\frac{3}{2}v(t)$, we put~(\ref{ds3ode}) into the normal form
\begin{gather}\label{gc2}
u'''-2 u'' u+3 (u')^2-\frac{4}{36-\left(\frac{3}{2}\right)^2} \big(6 u' -u^2\big)^2=0.
\end{gather}
We therefore see that the ODE that Dunajski and Sokolov study in \cite{ds} reduces to a generalised Chazy equation~(\ref{gc2}) with parameter $k'=\pm\frac{3}{2}$, related to the generalised Chazy equation~(\ref{3rds}) just by taking the reciprocals $(k')^2=\frac{1}{k^2}$ of the corresponding parameters.

Let $t(s)=\frac{w_2(s)}{w_1(s)}$ where $w_1(s)$, $w_2(s)$ are linearly independent solutions to the hypergeometric dif\/ferential equation~(\ref{hypergeom}) with $(a,b,c)$ one of
\begin{gather*}
\left(-\frac{1}{4 }, \frac{5}{12}, \frac{1}{2}\right),\qquad
\left(-\frac{1}{4}, \frac{5}{12}, \frac{2}{3}\right),\qquad
\left(-\frac{1}{2 }, \frac{5}{6},\frac{2}{3}\right).
\end{gather*}
The solution to (\ref{gc2}) is then given by $u=6\frac{\der}{\der t}\log w_1$. A similar leading order analysis as before shows that the leading orders occur at
\begin{gather*}
-6, \quad -\frac{9}{8},\quad -\frac{15}{8}.
\end{gather*}
This corresponds to solutions of (\ref{gc2}) given by
\begin{gather*}
u(t)=-\frac{6}{t}, \qquad u(t)=-\frac{9}{4t}, \qquad u(t)=-\frac{15}{4t}.
\end{gather*}
Along with the zero solution $u(t)=0$,
these correspond respectively (modulo constants of integration) to solutions of~(\ref{ds6}) given by
\begin{gather*}
H(t)=t^{-2}, \qquad H(t)=t^{\frac{1}{2}}, \qquad H(t)=t^{-\frac{1}{2}}, \qquad H(t)=t^2.
\end{gather*}

In~\cite[Proposition~2.2]{annur}, it is shown that a Legendre transformation takes (\ref{6thode}) to (\ref{7thode}). Hence we may hypothesise that amongst all $3^{\rm rd}$ order generalised Chazy equations, only those with the parameters $k'=\pm \frac{3}{2}$, $k=\pm \frac{2}{3}$ have in addition solutions that can be obtained from the dual equation via a Legendre transform. 
\begin{Proposition}[\protect{\cite[Proposition~2.2]{annur}}]\label{leg1}
Consider the Legendre transformation
\begin{gather*}
F(q)+H(t)=q t.
\end{gather*}
Then $F(q)$ satisfies the ODE \eqref{6thode} iff $H(t)$ satisfies the ODE~\eqref{ds6}.
\end{Proposition}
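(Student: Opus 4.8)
The plan is to verify the Legendre duality by direct substitution, tracking how derivatives transform. Starting from the relation $F(q) + H(t) = qt$, I differentiate with respect to $q$: writing $t = t(q)$ for the implicit function determined by the transformation, I get $F'(q) + H'(t)t' = t + qt'$, and since the Legendre transform is characterized by $H'(t) = q$ (equivalently $F'(q) = t$), the terms cancel to give $F'(q) = t$. This is the key starting identity: under the Legendre transform, $q$ and $t$ are swapped as ``position'' and ``slope.'' Differentiating $F'(q) = t$ once more gives $F''(q) = t'(q) = 1/q'(t)$, so $F''(q) = 1/H''(t)$, using $q'(t) = H''(t)$ from $H'(t) = q$.

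Next I would set up the recursion for higher derivatives. Let me abbreviate $F^{(k)} = F^{(k)}(q)$ and $H^{(k)} = H^{(k)}(t)$. From $F'' = 1/H''$ and $dq/dt = H''$, I can compute $F''' = \frac{d}{dq}(1/H'') = \frac{1}{H''}\frac{d}{dt}(1/H'') = -H'''/(H'')^3$. Continuing this way, each $F^{(k)}$ for $k \geq 2$ becomes a rational expression in $H'', H''', \dots, H^{(k)}$ with denominator a power of $H''$. I expect the pattern
\begin{gather*}
F^{(2)} = \frac{1}{H^{(2)}}, \qquad F^{(3)} = -\frac{H^{(3)}}{(H^{(2)})^3}, \qquad F^{(4)} = \frac{3(H^{(3)})^2 - H^{(2)}H^{(4)}}{(H^{(2)})^5},
\end{gather*}
and similarly for $F^{(5)}$ and $F^{(6)}$ with denominators $(H^{(2)})^7$ and $(H^{(2)})^9$ respectively. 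The cleanest way to organize this is to note that $F^{(k+1)} = \frac{1}{H^{(2)}} \frac{d}{dt} F^{(k)}$, so one builds the table iteratively; alternatively one recognizes these as the classical formulas relating the derivatives of a function and its inverse (since $q \mapsto F'(q)$ and $t \mapsto H'(t)$ are mutually inverse), which are governed by Bell polynomials or the Lagrange inversion formula.

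The final step is to substitute the table of $F^{(k)}$'s into the left-hand side of~(\ref{6thode}) and check that it becomes, up to an overall nonzero factor which will be a power of $H^{(2)}$ (I anticipate $(H^{(2)})^{-12}$ or similar, matched against a compensating factor), exactly the left-hand side of~(\ref{ds6}). Concretely, one writes $10 F^{(6)}(F'')^3 - 80 (F'')^2 F^{(3)}F^{(5)} - 51 (F'')^2 (F^{(4)})^2 + 336 F''(F^{(3)})^2 F^{(4)} - 224 (F^{(3)})^4$, clears denominators, and collects terms; the claim is that the numerator equals a monomial in $H^{(2)}$ times $\big(10 (H^{(2)})^3 H^{(6)} - 70 (H^{(2)})^2 H^{(3)}H^{(5)} - 49 (H^{(2)})^2 (H^{(4)})^2 + 280 H^{(2)}(H^{(3)})^2 H^{(4)} - 175 (H^{(3)})^4\big)$. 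Since the transformation is involutive ($F \leftrightarrow H$, $q \leftrightarrow t$), the converse direction is automatic once one direction is established.

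The main obstacle is purely the bookkeeping in the substitution: expanding $F^{(5)}$ and $F^{(6)}$ in terms of the $H^{(k)}$ produces expressions with many terms, and verifying the cancellation down to the five-term form of~(\ref{ds6}) requires either a careful hand computation or a symbolic check. There is no conceptual difficulty — the identity is forced by the fact that both ODEs characterize the same geometric object (the flat $(2,3,5)$-distribution / submaximal symmetry locus) and Legendre transformation is a contact transformation — but confirming the precise numerical coefficients $10, -80, -51, 336, -224$ map to $10, -70, -49, 280, -175$ is where the work lies. I would carry out the derivative table first, double-check $F^{(4)}$ and $F^{(5)}$ against the known special solutions (e.g., $F(q) = q^2 \leftrightarrow H(t) = t^2/4$, $F(q) = q^{-1} \leftrightarrow H(t) = -2t^{1/2} + \text{const}$, matching the correspondences listed earlier in the excerpt) as a sanity check, and only then perform the full substitution.
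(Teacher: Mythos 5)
Your approach is essentially the paper's own proof: from $F(q)+H(t)=qt$ one reads off $F'=t$, $H'=q$, applies $\frac{\der}{\der q}=\frac{1}{H''}\frac{\der}{\der t}$ to build the table $F''=\frac{1}{H''}$, $F^{(3)}=-\frac{H^{(3)}}{(H'')^3}$, etc., and then checks by direct substitution that the left-hand side of~(\ref{6thode}) becomes a power of $H''$ times the left-hand side of~(\ref{ds6}), the paper likewise leaving this final step as ``a computation shows.'' Your derivative table (including $F^{(4)}$) and the involutivity remark handling the converse are correct, so the proposal is sound and matches the paper's route.
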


\begin{proof}
Applying the exterior derivative to the relation gives
\begin{gather*}
(F'-t) \der q+(H'-q) \der t=0,
\end{gather*}
so that we take $F'=t$, $H'=q$ and applying
\begin{gather*}
 \frac{\der }{\der q} = \frac{1}{H''}\frac{\der}{\der t}
\end{gather*}
we obtain $F''=\frac{1}{H''}$, $F^{(3)}=-\frac{H^{(3)}}{(H'')^3}$, etc.
A computation shows that the $6^{\rm th}$ order ODE~(\ref{6thode}) holds for $F$ if\/f (\ref{ds6}) holds for $H$.
\end{proof}

In light of the solutions obtained by solving the generalised Chazy equations, we can pass to
\begin{gather}\label{intp}
F(q)=\iint {\rm e}^{\int \frac{I(q)}{2} \der q}\der q \der q,
\end{gather}
where $q=\frac{z_2(s)}{z_1(s)}$ and $I(q)=6 \frac{\der}{\der q}\log z_1$ are given in Corollary~\ref{I235}. This gives
\begin{gather*}
F(q)=\iint (z_1)^3 \der q \der q.
\end{gather*}
Similarly, for the dual equation (\ref{ds6}) under the Legendre transform we pass to
\begin{gather*}
H(t)=\iint {\rm e}^{\int \frac{2u(t)}{3} \der t}\der t \der t,
\end{gather*}
where $t=\frac{w_2(s)}{w_1(s)}$ and $u(t)=6 \frac{\der}{\der t}\log w_1$ are solutions to (\ref{gc2}). This gives
\begin{gather*}
H(t)=\iint (w_1)^4 \der t \der t.
\end{gather*}
We have
\begin{Lemma}\label{legendre}\sloppy
There exists a Legendre transformation between Chazy's solutions of~\eqref{3rds} and~\eqref{gc2} given by taking
\begin{gather*}
w_1(s)=z_1^{-\frac{3}{4}}, \qquad w_2(s)=(z_1)^{-\frac{3}{4}} \int (z_1) (\dot{z}_2 z_1-\dot{z}_1z_2) \der s.
\end{gather*}
This defines a mapping
\begin{gather*}
q=\frac{z_2(s)}{z_1(s)} \mapsto t=\frac{w_2(s)}{w_1(s)}=\int z_1 (\dot{z}_2 z_1-\dot{z}_1z_2) \der s.
\end{gather*}
If $I(q)=6 \frac{\der}{\der q}\log z_1$ solves~\eqref{3rds} where $z_1(s)$ and $z_2(s)$ are given in Corollary~{\rm \ref{I235}}, then $u(t)=6 \frac{\der}{\der t}\log w_1$ solves the dual ODE~\eqref{gc2}.
Consequently, if
$F(q)=\iint (z_1)^3 \der q \der q$ solves~\eqref{6thode}, then
\begin{gather*}
H(t)=\iint (w_1)^4 \der t\der t=\iint (z_1)^{-2} (\dot{z}_2 z_1-\dot{z}_1z_2) \der s
 \, z_1(\dot{z}_2 z_1-\dot{z}_1z_2) \der s
\end{gather*}
solves the $6^{\text{th}}$ order ODE~\eqref{ds6}.
For the converse, the Legendre transform is given by
\begin{gather*}
z_1(s)=w_1^{-\frac{4}{3}}, \qquad z_2(s)=(w_1)^{-\frac{4}{3}} \int (w_1)^2 (\dot{w}_2 w_1-\dot{w}_1w_2) \der s.
\end{gather*}
This sends
\begin{gather*}
t=\frac{w_2(s)}{w_1(s)} \mapsto q=\frac{z_2(s)}{z_1(s)}=\int (w_1)^2 (\dot{w}_2 w_1-\dot{w}_1w_2) \der s.
\end{gather*}
In particular, if $u(t)=6 \frac{\der}{\der t}\log w_1$ solves the dual ODE~\eqref{gc2}, then $I(q)=6 \frac{\der}{\der q}\log z_1$ solves~\eqref{3rds}.
Hence, if
$H(t)=\iint (w_1)^4 \der t \der t$ solves the $6^{\text{th}}$ order ODE~\eqref{ds6}, then
\begin{gather*}
F(q)=\iint (z_1)^3 \der q\der q=\iint (w_1)^{-2} (\dot{w}_2 w_1-\dot{w}_1w_2) \der s
\, (w_1)^2(\dot{w}_2 w_1-\dot{w}_1w_2) \der s
\end{gather*}
solves~\eqref{6thode}.
\end{Lemma}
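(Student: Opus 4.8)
\emph{Proof proposal.}
The plan is to build the Legendre-dual pair directly from Proposition~\ref{leg1} and the two reductions to generalised Chazy normal form. The crucial bookkeeping is to identify $F''$ and $H''$ with powers of the hypergeometric solutions. Running the reduction of~\eqref{6thode} backwards as in~\eqref{intp}, a solution $I=6\frac{\der}{\der q}\log z_1$ of~\eqref{3rds} gives $F$ with $F''={\rm e}^{\int \frac{I}{2}\,\der q}$, and since $\frac{I}{2}=3\frac{\der}{\der q}\log z_1$ we have $F''=z_1^{3}$ up to a multiplicative constant that~\eqref{6thode} (homogeneous of degree four in the derivatives of $F$ and free of $F$ and $F'$) does not see; so $F=\iint z_1^{3}\,\der q\,\der q$ solves~\eqref{6thode}. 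Symmetrically, in reducing~\eqref{ds6} one sets $H''={\rm e}^{p}$, $v=p'=\frac{\der}{\der t}\log H''$ and $u=\frac{3}{2}v$, so a solution $u=6\frac{\der}{\der t}\log w_1$ of~\eqref{gc2} corresponds to $\frac{\der}{\der t}\log H''=4\frac{\der}{\der t}\log w_1$, i.e.\ $H''=w_1^{4}$ up to a constant, whence $H=\iint w_1^{4}\,\der t\,\der t$ solves~\eqref{ds6}. The asymmetric exponents $3$ and $4$ are precisely the rescalings $j=I/2$ and $u=\frac{3}{2}v$ used to normalise the two Chazy equations, and are the mechanism behind the reciprocal Chazy parameters.

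First I would impose the Legendre relation $F(q)+H(t)=qt$ of Proposition~\ref{leg1}, so that $t=F'(q)$, $q=H'(t)$ and $F''(q)\,H''(t)=1$ on the open set where $z_1$, hence $w_1$, is nonzero. Substituting $F''=z_1^{3}$ forces $H''=z_1^{-3}=(z_1^{-3/4})^{4}$, i.e.\ $H''=w_1^{4}$ with $w_1=z_1^{-3/4}$, which is the displayed choice; by Proposition~\ref{leg1} the dual function $H$ solves~\eqref{ds6}, and, since $H''=w_1^{4}$, the reduction of~\eqref{ds6} to~\eqref{gc2} gives that $u=\frac{3}{2}\frac{\der}{\der t}\log H''=6\frac{\der}{\der t}\log w_1$ solves~\eqref{gc2}. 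For the explicit map $q\mapsto t$ I would change variables from $q$ to $s$ via $q=z_2/z_1$, so that $\der q=\frac{\dot{z}_2 z_1-\dot{z}_1 z_2}{z_1^{2}}\,\der s$ and
\begin{gather*}
t=F'(q)=\int z_1^{3}\,\der q=\int z_1(\dot{z}_2 z_1-\dot{z}_1 z_2)\,\der s,\qquad w_2=t\,w_1=z_1^{-3/4}\int z_1(\dot{z}_2 z_1-\dot{z}_1 z_2)\,\der s;
\end{gather*}
inserting $\der t=z_1(\dot{z}_2 z_1-\dot{z}_1 z_2)\,\der s$ and $w_1^{4}=z_1^{-3}$ into $H=\iint w_1^{4}\,\der t\,\der t$ then reproduces the stated iterated-integral formula for $H$.

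The converse is carried out identically with $(z_1,z_2)$ and $(w_1,w_2)$ interchanged: $F''\,H''=1$ now reads $z_1^{3}=w_1^{-4}$, i.e.\ $z_1=w_1^{-4/3}$; from $t=w_2/w_1$ we get $\der t=\frac{\dot{w}_2 w_1-\dot{w}_1 w_2}{w_1^{2}}\,\der s$, hence $q=H'(t)=\int w_1^{4}\,\der t=\int w_1^{2}(\dot{w}_2 w_1-\dot{w}_1 w_2)\,\der s$ and $z_2=q\,z_1$; and the two reductions together with Proposition~\ref{leg1} show that $I=6\frac{\der}{\der q}\log z_1$ solves~\eqref{3rds} and $F=\iint z_1^{3}\,\der q\,\der q$ solves~\eqref{6thode}, in the substitution-in-$s$ form displayed. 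The computations are routine once the two exponents are pinned down. The points needing care are the choice of open set on which the fractional powers $z_1^{-3/4}$ and $w_1^{-4/3}$ are defined, and keeping the Jacobians $\der q/\der s$ and $\der t/\der s$ (built from the Wronskians $\dot{z}_2 z_1-\dot{z}_1 z_2$ and $\dot{w}_2 w_1-\dot{w}_1 w_2$) consistent throughout; the latter is the main, if mild, obstacle. The multiplicative constant in $w_1$ and the integration constants in the iterated integrals are immaterial, as they leave $u$ and $I$ unchanged and alter $H$ and $F$ only by affine functions, to which~\eqref{6thode} and~\eqref{ds6} are insensitive.
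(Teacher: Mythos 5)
Your proposal is correct and follows essentially the same route as the paper: both rest on Proposition~\ref{leg1} together with the identifications $F''=(z_1)^3$ and $H''=(w_1)^4$ coming from the two Chazy reductions, from which $(z_1)^3(w_1)^4=1$, i.e.\ $w_1=z_1^{-3/4}$, and the formulas for $w_2$, $t$ and $H$ follow by the change of variable to $s$. The only cosmetic difference is that you obtain $(z_1)^3=(w_1)^{-4}$ directly from $F''H''=1$, whereas the paper multiplies the two $s$-differentiated Wronskian relations for $t=F'$ and $q=H'$ — the same fact — and you spell out the step the paper calls a routine computation (that $u=\tfrac{3}{2}\frac{\der}{\der t}\log H''$ solves~\eqref{gc2} via the reduction of~\eqref{ds6}).
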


\begin{proof}
We observe that as a consequence of Chazy's solutions, the Legendre transform in Proposition \ref{leg1} gives
\begin{gather*}
\frac{w_2}{w_1}=t= F'=\int (z_1)^3 \der q=\int z_1 (\dot{z}_2 z_1-\dot{z}_1 z_2)\der s,\\
\frac{z_2}{z_1}=q= H'=\int (w_1)^4 \der t=\int (w_1)^2 (\dot{w}_2 w_1-\dot{w}_1 w_2)\der s,
\end{gather*}
and therefore
\begin{gather*}
\frac{\dot{w}_2w_1-\dot{w}_1 w_2}{(w_1)^2} =z_1 (\dot{z}_2 z_1-\dot{z}_1 z_2),\\
\frac{\dot{z}_2z_1-\dot{z}_1 z_2}{(z_1)^2} =(w_1)^2 (\dot{w}_2 w_1-\dot{w}_1 w_2).
\end{gather*}
Together this yields $(z_1)^3=(w_1)^{-4}$,
from which we deduce
\begin{gather*}
w_1=z_1^{-\frac{3}{4}}\qquad\mbox{and}\qquad w_2=w_1\int z_1 (\dot{z}_2 z_1-\dot{z}_1 z_2)\der s=z_1^{-\frac{3}{4}}\int z_1 (\dot{z}_2 z_1-\dot{z}_1 z_2)\der s.
\end{gather*}
For the converse, we f\/ind
\begin{gather*}
z_1=w_1^{-\frac{4}{3}}\qquad\mbox{and}\qquad
z_2=z_1\int (w_1)^2 (\dot{w}_2 w_1-\dot{w}_1 w_2)\der s=w_1^{-\frac{4}{3}}\int (w_1)^2 (\dot{w}_2 w_1-\dot{w}_1 w_2)\der s.
\end{gather*}
The rest follows from a routine computation.
\end{proof}

In \cite[formula~(8)]{ds}, a family of solutions to~(\ref{7thode})
is found to be given by the algebraic curve
\begin{gather*}
(y+f(t))^2=(t-a)(t-b)^3,
\end{gather*}
with $a \neq b$, and $f(t)$ a quadratic.
This gives
\begin{gather*}
y=\pm \sqrt{(t-a)(t-b)^3}-f(t).
\end{gather*}
We obtain a solution to (\ref{7thode}) with
\begin{gather*}
y^{(3)}=\pm \frac{3}{8}\frac{(t-b)^6(a-b)^3}{(y+f)^5}=\pm \frac{3(a-b)^3}{8(t-a)^2(y+f)}.
\end{gather*}
We f\/ind that for this solution, it yields
\begin{gather*}
u(t)=\frac{3}{4}\frac{5b+3a-8t}{(t-a)(t-b)}
=-\frac{15}{4(t-a)}-\frac{9}{4(t-b)}
\end{gather*}
as a solution to the generalized Chazy's equation with parameter $k^2=\frac{9}{4}$. This corresponds to the solution given by Chazy in~(\ref{nsol}). It will be interesting to determine the solutions of~(\ref{gc2}) from the general solution given by~\cite[formula~(13)]{ds}.

\section[First order system and dif\/ferent parametrisations of Chazy's equations]{First order system and dif\/ferent parametrisations\\ of Chazy's equations}\label{fos}

In this section, we f\/irst show that the generalised Chazy equation is equivalent to solving a third order dif\/ferential equation involving the Schwarzian derivative and a potential term~$V(s)$.
It is well-known that solutions to the generalised Chazy equation~(\ref{gc0}) can be rewritten as a f\/irst order system. For further details, see~\cite{ach}. The f\/irst order system provides dif\/ferent parametrisations of the solutions, in addition to the one given by~(\ref{original}). We compute the solutions to the generalised Chazy equation~(\ref{3rds}) with $k=\pm\frac{2}{3}$ for the dif\/ferent parametrisations below and present them in Tables~\ref{table1}, \ref{table2} and~\ref{table3}. We also show how these solutions are related to one another by algebraic transformation of hypergeometric functions.

Let $\Omega_1$, $\Omega_2$, $\Omega_3$ be functions of~$q$.
Let dot denote dif\/ferentiation with respect to~$q$.
Then consider
\begin{gather}
\dot{\Omega}_1=\Omega_2 \Omega_3-\Omega_1(\Omega_2+\Omega_3)+\tau^2,\nonumber\\
\dot{\Omega}_2=\Omega_3 \Omega_1-\Omega_2(\Omega_3+\Omega_1)+\tau^2,\nonumber\\
\dot{\Omega}_3=\Omega_1 \Omega_2-\Omega_3(\Omega_1+\Omega_2)+\tau^2,\label{chazy1}
\end{gather}
where
\begin{gather*}
\tau^2=\alpha^2 (\Omega_1-\Omega_2)(\Omega_3-\Omega_1)+\beta^2 (\Omega_2-\Omega_3)(\Omega_1-\Omega_2)+\gamma^2(\Omega_3-\Omega_1)(\Omega_2-\Omega_3)
\end{gather*}
and $\alpha$, $\beta$, $\gamma$ are constants. Introducing the parameter
\begin{gather*}
s(q)=\frac{\Omega_1-\Omega_3}{\Omega_2-\Omega_3},
\end{gather*}
we f\/ind that
\begin{gather*}
\Omega_1 =-\frac{1}{2}\frac{\der}{\der q}\log\frac{\dot{s}}{s(s-1)},\qquad
\Omega_2 =-\frac{1}{2}\frac{\der}{\der q}\log\frac{\dot{s}}{s-1},\qquad
\Omega_3 =-\frac{1}{2}\frac{\der}{\der q}\log\frac{\dot{s}}{s}.
\end{gather*}
The system of equations (\ref{chazy1}) are satisf\/ied if\/f
\begin{gather}\label{schwarzian}
\{s,q\}+\frac{\dot{s}^2}{2}V(s)=0,
\end{gather}
where
\begin{align*}
\{s,q\}=\frac{\der}{\der q}\left(\frac{\ddot{s}}{\dot{s}}\right)-\frac{1}{2}\left(\frac{\ddot{s}}{\dot{s}}\right)^2
\end{align*}
is the Schwarzian derivative of $s(q)$
and
\begin{gather*}
V(s)=\frac{1-\beta^2}{s^2}+\frac{1-\gamma^2}{(s-1)^2}+\frac{\beta^2+\gamma^2-\alpha^2-1}{s(s-1)}.
\end{gather*}
Switching independent and dependent variables in (\ref{schwarzian}), we have
\begin{gather*}
\{s,q\}=-(\dot{s})^2\{q,s\},
\end{gather*}
so that the dual of (\ref{schwarzian}) is
\begin{gather*}
\{q,s\}-\frac{1}{2}V(s)=0.
\end{gather*}
The general solution is given by
\begin{gather*}
q(s)=\frac{u_2(s)}{u_1(s)}
\end{gather*}
where $u_1$, $u_2$ are linearly independent solutions of the second order ODE
\begin{gather}\label{ode1}
u''+\frac{1}{4}V(s) u=0.
\end{gather}
The general solution of (\ref{ode1}) suggests taking
\begin{gather*}
u(s)=(s-1)^{\frac{1-\gamma}{2}}s^{\frac{1-\beta}{2}}z(s)
\end{gather*}
(cf.\ \cite{ach2003}) to transform (\ref{ode1}) into the hypergeometric dif\/ferential equation (\ref{hypergeom}) with
\begin{gather*}
a =\frac{1}{2}(1-\alpha-\beta-\gamma),\qquad
b =\frac{1}{2}(1+\alpha-\beta-\gamma),\qquad
c =1-\beta.
\end{gather*}
In \cite{ach}, it was determined that taking
\begin{gather}\label{yes}
y=-2(\Omega_1+\Omega_2+\Omega_3)=\frac{\der}{\der q}\log\frac{\dot{s}^3}{s^2(s-1)^2}
\end{gather}
gives solutions to the generalised Chazy equation
(\ref{gc0}) with variable $q$ whenever $\alpha=\beta=\gamma=\frac{2}{k}$ or $\alpha=\frac{2}{k}$, $\beta=\gamma=\frac{1}{3}$ and its cyclic permutations. For $\alpha=\beta=\gamma=\frac{2}{k}$ this gives $(a,b,c)=\big(\frac{k-6}{2k},\frac{k-2}{2k}, \frac{k-2}{k}\big)$. For $\alpha=\frac{2}{k}$, $\beta=\gamma=\frac{1}{3}$ with cyclic permutations, this gives respectively
\begin{gather*}
(a,b,c)=\left(\frac{k-6}{6k}, \frac{k+6}{6k}, \frac{2}{3}\right), \qquad
\left(\frac{k-6}{6k}, \frac{k-2}{2k}, \frac{k-2}{k}\right), \qquad
\left(\frac{k-6}{6k}, \frac{k-2}{2k}, \frac{2}{3}\right),
\end{gather*}
with only the f\/irst coinciding with the list in Proposition~\ref{gct}. This suggests that the solutions to~(\ref{schwarzian}) are more general than the solution of the form $y=6\frac{\der}{\der q}\log z_1$ given by Chazy~\cite{chazy2}. We can express Chazy's solution in terms of $s(q)$ as follows.
A computation of the Wronskian of linearly independent solutions $z_1$, $z_2$ to~(\ref{hypergeom}) gives
\begin{gather*}
W(z_1,z_2)=z_1 \dot{z}_2-z_2 \dot{z}_1=w_0 (s-1)^{c-a-b-1} s^{-c}
\end{gather*}
for some non-zero constant $w_0$. The latter equality holds by solving the f\/irst order dif\/ferential equation
\begin{gather*}
W'=-\frac{c-(a+b+1)s}{s(1-s)}W.
\end{gather*}
See for instance \cite{comp}. From $q(s)=\frac{z_2(s)}{z_1(s)}$, we f\/ind that $\frac{\der}{\der q}=\frac{(z_1)^2}{z_1\dot{z}_2-z_2\dot{z}_1}\frac{\der}{\der s}$.
Applying this derivative to~$s(q)$, we obtain
\begin{gather*}
s'(q)=\frac{\der}{\der q}s(q)=\frac{(z_1)^2}{z_1\dot{z}_2-z_2\dot{z}_1}=\frac{(z_1)^2}{W(z_1,z_2)}.
\end{gather*}
Hence
\begin{align*}
s''(q)=\frac{\der}{\der q}s'(q)&=s'(q)\frac{\der}{\der s}\left(\frac{(z_1)^2}{W(z_1,z_2)}\right)
 =s'(q)\left(\frac{2 z_1 \dot{z}_1}{W(z_1,z_2)}-\frac{(z_1)^2 W'(z_1,z_2)}{W(z_1,z_2)^2}\right)\\
&=s'(q)\left(2 s'(q) \frac{\dot{z}_1}{z_1}-s'(q)\frac{ W'(z_1,z_2)}{W(z_1,z_2)}\right)\\
&=s'(q)\left(2 s'(q) \frac{\dot{z}_1}{z_1}+s'(q)\frac{ c-(a+b+1)s(q)}{s(q)(1-s(q))}\right)\\
&=s'(q)\left(2 s'(q) \frac{\dot{z}_1}{z_1}+s'(q)\frac{ c(1-s(q))-(a+b+1-c)s(q)}{s(q)(1-s(q))}\right),
\end{align*}
and we get
\begin{gather*}
\frac{\ddot{s}}{\dot{s}}
=2 \dot{s} \frac{\dot{z}_1}{z_1}+c\frac{\dot{s}}{s}-(a+b+1-c)\frac{\dot{s}}{1-s}.
\end{gather*}
Therefore we have
\begin{align*}
y=6\frac{\der}{\der q}\log z_1&=6\frac{(z_1)^2}{W(z_1,z_2)}\frac{\dot{z}_1}{z_1}=6 \dot{s}\frac{\dot{z}_1}{z_1}
 =3\frac{\ddot{s}}{\dot{s}}-3c\frac{\dot{s}}{s}+3(a+b+1-c)\frac{\dot{s}}{1-s}\\
&=3\frac{\der}{\der q}\log{\dot{s}}-3c\frac{\der}{\der q}\log s-3(a+b+1-c)\frac{\der}{\der q}\log (1-s)\\
&=3\frac{\der}{\der q}\log \frac{\dot{s}}{s^c(1-s)^{a+b+1-c}}
=\frac{1}{2}\frac{\der}{\der q}\log \frac{\dot{s}^6}{s^{6c}(1-s)^{6(a+b+1-c)}}.
\end{align*}
A comparison of Chazy's formula for $y=6\frac{\der}{\der q}\log z_1$ with the formula for~$y$ in~(\ref{yes})
suggests taking $c=\frac{2}{3}$, $a+b=\frac{1}{3}$. This is satisf\/ied by $(a,b,c)=\big(\frac{k-6}{6k}, \frac{k+6}{6k}, \frac{2}{3}\big)$ in Proposition~\ref{gct}.

For $k=\frac{2}{3}$ as in (\ref{3rds}), we get the following solutions for $u$ given in Table~\ref{table1}.
 \begin{table}[h]\centering
 \caption{}\label{table1}\vspace{1mm}

 \begin{tabular}{|c|c|c|} \hline
 $(\alpha,\beta,\gamma)$ & $(a,b,c)$ & General solution to $u''+\frac{1}{4}V(s) u=0$ \tsep{2pt}\bsep{2pt}\\ \hline\hline
$(3,3,3)$ & $(-4,-1,-2)$& $c_1 \frac{2s-1}{s(s-1)}+c_2 \frac{s^2(s-2)}{s-1}$ \tsep{4pt}\bsep{2pt} \\ \hline
$\left(3, \frac{1}{3}, \frac{1}{3}\right)$ & $\left(-\frac{4}{3}, \frac{5}{3}, \frac{2}{3}\right)$ & $c_1 (3s-2)s^{\frac{2}{3}} (s-1)^{\frac{1}{3}}+c_2 (3s-1) s^{\frac{1}{3}}(s-1)^{\frac{2}{3}}$ \tsep{4pt}\bsep{2pt}\\ \hline
$\left(\frac{1}{3 }, 3,\frac{1}{3}\right)$ & $\left(-\frac{4}{3},-1,-2\right)$ &$c_1 \frac{2s-3}{s} (s-1)^{\frac{1}{3}}+c_2 \frac{s-3}{s} (s-1)^{\frac{2}{3}}$ \tsep{4pt}\bsep{2pt}\\ \hline
$\left(\frac{1}{3}, \frac{1}{3},3\right)$ & $\left(-\frac{4}{3},-1,\frac{2}{3}\right)$ &$c_1 \frac{2s+1}{s-1} s^{\frac{1}{3}}+c_2 \frac{s+2}{s-1} s^{\frac{2}{3}}$
\tsep{4pt}\bsep{2pt}\\ \hline
 \end{tabular}
 \end{table}
For this Chazy parameter, the hypergometric series truncate and the solutions to~(\ref{ode1}) can be given by elementary functions. We have
\begin{gather*}
{}_2{F}_1(-4,-1;-2;s) =1-2s,\qquad
{}_2{F}_1\left(-\frac{4}{3},\frac{5}{3};\frac{2}{3};s\right) =\frac{3s^2-4s+1}{(1-s)^{\frac{2}{3}}},\\
{}_2{F}_1\left(-\frac{4}{3},-1;-2;s\right) =1-\frac{2}{3}s,\qquad
{}_2{F}_1\left(-\frac{4}{3},-1;\frac{2}{3};s\right) =1+2s.
\end{gather*}
Moreover, the solutions in each row are related to one another by algebraic transformations of hypergeometric functions. The solutions in the second, third and fourth rows of Table~\ref{table1} can be obtained from the f\/irst by a cubic transformation of hypergeometric functions (see~\cite[formula~(23)]{hyper}).
Explicitly, to show how the solution in the third row is related to the f\/irst, let~$\omega$ be a~cube root of unity (solution to $\omega^2+\omega+1=0$) and consider the map
\begin{gather*}
s \mapsto t=\frac{3 (2\omega+1) s(s-1)}{(s+\omega)^3}
\end{gather*}
and the relation
\begin{gather*}
\tilde{z}(t)=(1+\omega s)^{-4}z(s).
\end{gather*}
Then $\tilde{z}(t)$ is a solution to the hypergeometric dif\/ferential equation~(\ref{hypergeom}) with $(a,b,c)=\big({-}\frac{4}{3},-1$, $-2\big)$ if\/f $z(s)$ solves~(\ref{hypergeom}) with $(a,b,c)=(-4,-1,-2)$.
The solutions in the last three rows are related to one another by fractional linear transformations. The symmetry $s \mapsto 1-s$ interchanges~$\beta$ and $\gamma$ in $V(s)$ and hence transforms the solution in the~$3^{\rm rd}$ row to the solution given in the $4^{\rm th}$ row while $s \mapsto t=\frac{s}{s-1}$ and the relation $\tilde{z}(t)=(1-s)^{-\frac{4}{3}} z(s)$ transforms the solution in the $4^{\rm th}$ row to those in the $2^{\rm nd}$.

A dif\/ferent parametrisation of Chazy's equations (cf.~\cite{chazypara}) is also given by
\begin{align}\label{schwarz}
y=-\Omega_1-2\Omega_2-3\Omega_3=\frac{1}{2}\frac{\der}{\der q}\log \frac{\dot{s}^6}{s^4(s-1)^{3}}.
\end{align}
A comparison with Chazy's formula (\ref{original})
yields $(a,b,c)=\big(\frac{k-6}{12k}, \frac{k+6}{12k}, \frac{2}{3}\big)$ from Proposition \ref{gct}. The solution of the form~(\ref{schwarz}) solves the generalised Chazy equation (\ref{gct}) whenever $(\alpha, \beta, \gamma)$ in~(\ref{schwarzian}) is given by
\begin{gather*}
\left(\frac{1}{k}, \frac{1}{3}, \frac{1}{2}\right),\qquad
\left(\frac{1}{k}, \frac{2}{k}, \frac{1}{2}\right)\qquad \text{or}\qquad
 \left(\frac{1}{k}, \frac{1}{3},\frac{3}{k}\right).
\end{gather*}
The solution to (\ref{schwarzian}) with $(\alpha, \beta,\gamma)=\big(\frac{1}{k}, \frac{1}{3}, \frac{1}{2}\big)$ is given by the Schwarz function~$J$
(see~\cite{ach,ach2003}).

Considering the symmetry $s=1-K$ brings (\ref{schwarz}) to
\begin{gather}\label{schwarz2}
y=-\Omega_1-3\Omega_2-2\Omega_3=\frac{1}{2}\frac{\der}{\der q}\log \frac{-\dot{K}^6}{(1-K)^4K^{3}}
\end{gather}
and comparing with Chazy's formula gives $(a,b,c)=\big(\frac{k-6}{12k}, \frac{k+6}{12k}, \frac{1}{2}\big)$
from Proposition~\ref{gct}.
The solution of the form~(\ref{schwarz2}) solves~(\ref{gct}) whenever $(\alpha, \beta, \gamma)$ in~(\ref{schwarzian}) is given by
\begin{gather*}
\left(\frac{1}{k}, \frac{1}{2}, \frac{1}{3}\right),\qquad
\left(\frac{1}{k}, \frac{1}{2}, \frac{2}{k}\right) \qquad \text{or} \qquad
 \left(\frac{1}{k}, \frac{3}{k},\frac{1}{3}\right).
\end{gather*}
The symmetry $s=1-K$ permutes $\beta$ and $\gamma$ in the formula for~$V(s)$ in~(\ref{schwarzian}).
For~$k=\frac{2}{3}$, the solutions to~(\ref{ode1}) with the parametrisation by~(\ref{schwarz}) are presented in Table~\ref{table2}.

 \begin{table}[h]\centering
 \caption{} \label{table2}\vspace{1mm}
 \begin{tabular}{|c|c|c|} \hline
 $(\alpha,\beta,\gamma)$&$(a,b,c)$ & General solution to $u''+\frac{1}{4}V(s) u=0$ \tsep{2pt}\bsep{2pt}\\ \hline\hline
$\left(\frac{3}{2}, \frac{1}{3}, \frac{1}{2}\right)$ & $\left(\frac{5}{6}, -\frac{2}{3}, \frac{2}{3}\right)$ & $c_1 (s-1)^{\frac{1}{4}}s^{\frac{1}{2}}P^{\frac{1}{3}}_{1}(\sqrt{1-s})+c_2 (s-1)^{\frac{1}{4}}s^{\frac{1}{2}}Q^{\frac{1}{3}}_{1}(\sqrt{1-s})$
\tsep{6pt}\bsep{2pt}\\ \hline
$\left(\frac{3}{2 }, 3,\frac{1}{2}\right)$ & $\left(-\frac{1}{2}, -2, -2\right)$ &$c_1 \frac{(s-1)^{\frac{3}{4}}}{s}+c_2 \frac{(s-1)^{\frac{1}{4}}}{s} (s^2+4s-8)$ \tsep{6pt}\bsep{2pt}\\ \hline
$\left(\frac{3}{2}, \frac{1}{3},\frac{9}{2}\right)$ & $\left(-\frac{7}{6}, -\frac{8}{3}, \frac{2}{3}\right)$& $c_1 s^{\frac{2}{3}}(s-1)^{\frac{11}{4}}\,{}_2{F}_1\big(\frac{13}{6},\frac{11}{3};\frac{4}{3};s\big)+c_2 s^{\frac{1}{3}}(s-1)^{\frac{11}{4}}\, {}_2{F}_1(\frac{11}{6},\frac{10}{3};\frac{2}{3};s)$ \tsep{4pt}\bsep{2pt}\\ \hline
 \end{tabular}

 \end{table}
We also note that we have
\begin{gather*}
{}_2{F}_1\left(\frac{5}{6},-\frac{2}{3};\frac{2}{3};s\right) =\frac{\Gamma\left(\frac{2}{3}\right)^3 \sqrt{3}}{\pi}P^{\left(-\frac{1}{3},-\frac{1}{2}\right)}_{\frac{2}{3}}(1-2s),\\
{}_2{F}_1\left(-\frac{1}{2},-2;-2;s\right) =\frac{1}{8}\big(8-4s-s^2\big),\\
{}_2{F}_1\left(-\frac{7}{6},-\frac{8}{3};\frac{2}{3};s\right) =\frac{10}{7}\frac{\Gamma\left(\frac{2}{3}\right)^3 \sqrt{3}}{\pi}P^{\left(-\frac{1}{3},-\frac{9}{2}\right)}_{\frac{8}{3}}(1-2s).
\end{gather*}
The algebraic transformations relating the solutions between the rows are given as follows.
The map that takes the solution from the second row to the solution in the f\/irst row of Table~\ref{table2} is a~composition of fractional linear transformations and cubic transformation due to Goursat (see~\cite[formulas~(20)--(22)]{hyper} and~\cite[formula~(123)]{goursat}).
To see this, we f\/irst consider the map
\begin{gather*}
s \mapsto t=\frac{s(9-s)^2}{(s+3)^3}
\end{gather*}
and the relation
\begin{gather*}
\tilde{z}(t)=\left(1+\frac{s}{3}\right)^{-2}z(s).
\end{gather*}
Then $\tilde{z}(t)$ satisf\/ies (\ref{hypergeom}) with $(a,b,c)=\left(-\frac{2}{3}, -\frac{1}{3}, \frac{1}{2}\right)$ if\/f $z(s)$ satisf\/ies (\ref{hypergeom}) with $(a,b,c)=\left(-2, -\frac{1}{2}, \frac{1}{2}\right)$. Now
\begin{gather*}
{}_2{F}_1\left(-\frac{2}{3},-\frac{1}{3};\frac{1}{2};t\right)
=(1-t)^{\frac{2}{3}}\, {}_2{F}_1\left(-\frac{2}{3},\frac{5}{6};\frac{1}{2};\frac{t}{t-1}\right)
\end{gather*}
while the map
$s \mapsto 1-s$ takes the dif\/ferential equation (\ref{hypergeom}) with $(a,b,c)=\big({-}\frac{2}{3}, \frac{5}{6}, \frac{1}{2}\big)$ to $(a,b,c)=\big({-}\frac{2}{3}, \frac{5}{6}, \frac{2}{3}\big)$.
Moreover, we have
\begin{gather*}
{}_2{F}_1\left(-2,-\frac{1}{2};\frac{1}{2};1-s\right)
=\frac{1}{3}\big(8-4s-s^2\big)
=\frac{8}{3}\,{}_2{F}_1\left(-2,-\frac{1}{2};-2;s\right),
\end{gather*}
so that all together the composition gives the map
\begin{gather*}
s \mapsto \tilde t=-\frac{(s-4)^3}{27 s^2}
\end{gather*}
and the relation
\begin{gather*}
\tilde{z}(\tilde t)=s^{-\frac{4}{3}}z(s).
\end{gather*}
Thus $\tilde{z}(\tilde t)$ satisf\/ies~(\ref{hypergeom}) with $(a,b,c)=\big({-}\frac{2}{3}, \frac{5}{6}, \frac{2}{3}\big)$ if\/f $z(s)$ satisf\/ies~(\ref{hypergeom}) with $(a,b,c)=\big({-}2$, $-\frac{1}{2}, -2\big)$.

To obtain the solution given in the third row from those in the f\/irst row requires a transformation of degree $4$ (see~\cite[formulas~(25)--(27)]{hyper} and \cite[equation~(131)]{goursat}).
Consider the map
\begin{gather*}
s \mapsto t=-\frac{s(s+8)^3}{64(1-s)^3}
\end{gather*}
and the relation
\begin{gather*}
\tilde{z}(t)=(1-s)^{\frac{5}{2}}z(s).
\end{gather*}
Then $\tilde{z}(t)$ satisf\/ies (\ref{hypergeom}) with $(a,b,c)=\big(\frac{5}{6},\!{-}\frac{2}{3},\! \frac{2}{3}\big)$ if\/f $z(s)$ satisf\/ies (\ref{hypergeom}) with $(a,b,c)=\big(\frac{11}{6}, \!\frac{10}{3}, \!\frac{2}{3}\big)$. Finally, we use the Euler transformation,{\samepage
\begin{gather*}
{}_2{F}_1\left(-\frac{7}{6},-\frac{8}{3};\frac{2}{3};s\right)=(1-s)^{\frac{9}{2}}\, {}_2{F}_1\left(\frac{11}{6},\frac{10}{3};\frac{2}{3};s\right)
\end{gather*}
to obtain the solution in the $3^{\rm rd}$ row.}

There is furthermore a degree 6 transformation relating the solution in the f\/irst row of Table~\ref{table1} to the solution in the f\/irst row of Table \ref{table2} (cf.~\cite[equation~(28)]{hyper}, \cite[equation~(134)]{goursat} and~\cite[Table~1]{acht}).
Consider the map
\begin{gather*}
s \mapsto t=\frac{27s^2(s-1)^2}{4(s^2-s+1)^3}
\end{gather*}
and the relation
\begin{gather*}
\tilde{z}(t)=(1-s+s^2)^{-2}z(s).
\end{gather*}
Then $\tilde{z}(t)$ satisf\/ies~(\ref{hypergeom}) with $(a,b,c)=\big({-}\frac{1}{3}, -\frac{2}{3}, -\frac{1}{2}\big)$ if\/f $z(s)$ satisf\/ies~(\ref{hypergeom}) with $(a,b,c)= (-4, -1, -2 )$. Furthermore
$s \mapsto 1-s$ takes the solution to~(\ref{hypergeom}) with
$(a,b,c)=\big({-}\frac{1}{3}, -\frac{2}{3}, -\frac{1}{2}\big)$ to the solution with $(a,b,c)=\big({-}\frac{1}{3}, -\frac{2}{3}, \frac{1}{2}\big)$
and an Euler transformation followed by $s \mapsto 1-s$ again takes this to the solution with
$(a,b,c)=\big(\frac{5}{6}, -\frac{2}{3}, \frac{2}{3}\big)$ as given in row~1 of Table~\ref{table2}.

Finally, consider the parametrisation given by
\begin{gather}\label{y411}
y=-4\Omega_1-\Omega_2-\Omega_3=\frac{\der}{\der q}\log \frac{\dot{s}^3}{(s-1)^\frac{5}{2}s^{\frac{5}{2}}}.
\end{gather}
We f\/ind that $(\alpha,\beta,\gamma)$ is one of
\begin{gather*}
\left(\frac{4}{k},\frac{1}{k},\frac{1}{k}\right), \qquad \left(\frac{2}{3},\frac{1}{k},\frac{1}{k}\right),\qquad \left(\frac{2}{3},\frac{1}{6},\frac{1}{6}\right).
\end{gather*}
For $(\alpha,\beta,\gamma)=\left(\frac{4}{k},\frac{1}{k},\frac{1}{k}\right)$,
we f\/ind that
$(a,b,c)=\big(\frac{k-6}{2k},\frac{k+2}{2k},\frac{k-1}{k}\big)$. For $k=\frac{2}{3}$, we obtain $(a,b,c)=(-4,2,-\frac{1}{2})$. Let us relate the solution to the dif\/ferential equation (\ref{hypergeom}) with $(a,b,c)=(-4,2,-\frac{1}{2})$ to the solution given in the second row of Table \ref{table2}.
For $(\alpha,\beta,\gamma)=\big(\frac{1}{2},\frac{1}{k},\frac{2}{k}\big)$,
we f\/ind that
$(a,b,c)=\big(\frac{k-6}{4k},\frac{3k-6}{4k},\frac{k-1}{k}\big)$.
We have
\begin{gather*}
{}_2{F}_1\left(\frac{1}{2}-\frac{3}{k},\frac{1}{2}+\frac{1}{k};1-\frac{1}{k};s\right)
 ={}_2{F}_1\left(\frac{1}{4}-\frac{3}{2k},\frac{1}{4}+\frac{1}{2k};1-\frac{1}{k};4s(1-s)\right)\\
\qquad {} =(1-4s(1-s))^{-\frac{1}{4}+\frac{3}{2k}}{}_2F{}_1\left(\frac{1}{4}
-\frac{3}{2k},\frac{3}{4}-\frac{3}{2k};1-\frac{1}{k};\frac{4s(1-s)}{4s(1-s)-1}\right)
\end{gather*}
and thus for $k=\frac{2}{3}$, we have
\begin{gather*}
{}_2{F}_1\left(-4,2;-\frac{1}{2};s\right)=(1-4s(1-s))^{2}\, {}_2{F}_1\left(-2,-\frac{3}{2};-\frac{1}{2};\frac{4s(1-s)}{4s(1-s)-1}\right).
\end{gather*}
Again $s \mapsto 1-s$ brings the solution to (\ref{hypergeom}) with $(a,b,c)=\big({-}2,-\frac{3}{2},-\frac{1}{2}\big)$ to the solution with $(a,b,c)=\left(-2,-\frac{3}{2},-2\right)$ and an Euler transform gives the solution with $(a,b,c)=\big({-}2,-\frac{1}{2},-2\big)$.

For $(\alpha,\beta,\gamma)=\big(\frac{2}{3},\frac{1}{k},\frac{1}{k}\big)$,
we f\/ind that
$(a,b,c)=\big(\frac{k-6}{6k},\frac{5k-6}{6k},\frac{k-1}{k}\big)$. For $k=\frac{2}{3}$, this gives
$(a,b,c)=\big({-}\frac{4}{3},-\frac{2}{3},-\frac{1}{2}\big)$.
We have a degree 2 transformation of the solution to~(\ref{hypergeom}) with this value to the solution in the f\/irst row of Table~\ref{table2} given by the map
\begin{gather*}
s \mapsto t=-\frac{1}{4s(s-1)}
\end{gather*}
and the relation
\begin{gather*}
\tilde z(t)=\frac{1}{4}(s(1-s))^{-\frac{2}{3}}z(s).
\end{gather*}
We have $\tilde z(t)$ satisfying (\ref{hypergeom}) with $(a,b,c)=\big({-}\frac{2}{3},\frac{5}{6},\frac{2}{3}\big)$
if\/f~$z(s)$ satisf\/ies (\ref{hypergeom}) with $(a,b,c)=\big({-}\frac{4}{3},-\frac{2}{3},-\frac{1}{2}\big)$. Let us summarise the solutions to~(\ref{ode1}) with parametrisation given by~(\ref{y411}) in Table~\ref{table3}.

 \begin{table}[h]\centering \caption{} \label{table3}\vspace{1mm}
 \begin{tabular}{|c|c|c|} \hline
 $(\alpha,\beta,\gamma)$&$(a,b,c)$ & General solution to $u''+\frac{1}{4}V(s) u=0$ \tsep{2pt}\bsep{2pt}\\ \hline\hline
$\left(6, \frac{3}{2}, \frac{3}{2}\right)$ & $\left({-}4, 2, -\frac{1}{2}\right)$ & $c_1 (2s-1)(s(s-1))^{\frac{5}{4}}+c_2 \frac{128s^4-256s^3+144s^2-16s-1}{(s(s-1))^{\frac{1}{4}}}$ \tsep{5pt}\bsep{6pt}\\ \hline
$\left(\frac{2}{3 }, \frac{3}{2},\frac{3}{2}\right)$ & $\left({-}\frac{4}{3}, -\frac{2}{3}, -\frac{1}{2}\right)$ &$c_1 (s(s-1))^{\frac{5}{4}}\,{}_2{F}_1\left(\frac{5}{3},\frac{7}{3};\frac{5}{2};s\right) +c_2 \frac{(s-1)^{\frac{5}{4}}}{s^{\frac{1}{4}}}\, {}_2{F}_1\left(\frac{1}{6},\frac{5}{6};-\frac{1}{2};s\right)$ \tsep{7pt}\bsep{6pt} \\ \hline
$\left(\frac{2}{3}, \frac{1}{6},\frac{1}{6}\right)$ & $\left(0, \frac{2}{3}, \frac{5}{6}\right)$& $c_1 (s(s-1))^{\frac{7}{12}}\, {}_2{F}_1\left(\frac{1}{3},1;\frac{7}{6};s\right)+c_2 s^{\frac{5}{12}}(1-s)^{\frac{5}{12}}$ \tsep{5pt}\bsep{2pt} \\ \hline
\end{tabular}
 \end{table}

We also note that we have
\begin{gather*}
{}_2{F}_1\left(-4,2;-\frac{1}{2};s\right) =-\frac{128}{5}P^{\left(-\frac{3}{2},-\frac{3}{2}\right)}_{4}(1-2s)=-128s^4+256s^3-144s^2+16s+1,\\
{}_2F_1\left(-\frac{4}{3},-\frac{2}{3};-\frac{1}{2};s\right) =-\frac{16}{27}\frac{\pi^2 2^{\frac{2}{3}}}{\Gamma\left(\frac{2}{3}\right)^3}P^{\left(-\frac{3}{2},-\frac{3}{2}\right)}_{\frac{4}{3}}(1-2s),\\
{}_2F_1\left(0,\frac{2}{3};\frac{5}{6};s\right) =1.
\end{gather*}
Comparing the parametrisation (\ref{y411}) with solutions of the form (\ref{original}), we obtain $(a,b,c)=\big(0, \frac{2}{3}, \frac{5}{6}\big)$.
For this solution given in the third row, we obtain solutions of the form $y(q)=-\frac{6}{q}$. Up to fractional linear transformations in the variable $s$, we have $7$ classes of solutions to~(\ref{3rds}) determined by the solutions to~(\ref{schwarzian}). They are given by the solutions with $(\alpha,\beta,\gamma)$ either $(3, 3, 3)$ or
$\big(3, \frac{1}{3}, \frac{1}{3}\big)$
for $I(q)$ given by~(\ref{yes}), $(\alpha,\beta,\gamma)$ one of $\big(\frac{3}{2 }, \frac{1}{3},\frac{1}{2}\big)$, $\big(\frac{3}{2}, \frac{1}{3},\frac{9}{2}\big)$ or $\big(\frac{3}{2}, 3,\frac{1}{2}\big)$
for $I(q)$ given by~(\ref{schwarz}) and $(\alpha,\beta,\gamma)$ either $\big(6,\frac{3}{2 }, \frac{3}{2}\big)$ or $\big(\frac{2}{3}, \frac{3}{2},\frac{3}{2}\big)$
for~$I(q)$ given by~(\ref{y411}).

When $\alpha=\beta=\gamma=0$, we have $\tau=0$. The f\/irst order system (\ref{chazy1}) is the classical Darboux--Halphen system and $y=-2(\Omega_1+\Omega_2+\Omega_3)$ satisf\/ies Chazy's equation~(\ref{chazy}). This system arises as the anti-self-dual Ricci-f\/lat equations for Bianchi-IX metrics (see~\cite{mono,tod2}).

\section[Examples of f\/lat $(2,3,5)$-distributions]{Examples of f\/lat $\boldsymbol{(2,3,5)}$-distributions}\label{examples}

In \cite{conf}, Nurowski associated to $(2,3,5)$-distributions a conformal class of metrics of signatu\-re~$(2,3)$. The fundamental curvature invariant of $(2,3,5)$-distributions appears as the Weyl tensor of Nurowski's metric. For $(2,3,5)$-distributions~$\mathcal{D}_{F(q)}$ determined by a single function~$F(q)$, the metric is described in~\cite{annur,conf}. The distribution $\mathcal{D}_{F(q)}$ on $M_{xyzpq}$ is encoded by the annihilator of the three 1-forms
\begin{gather}\label{coframe}
\omega_1 =\der y-p \der x,\qquad
\omega_2 =\der p-q \der x,\qquad
\omega_3 =\der z-F(q) \der x,
\end{gather}
and supplemented by the 1-forms
\begin{gather*}
\omega_4=\der q, \qquad \omega_5=\der x.
\end{gather*}
The coframe on $M_{xyzpq}$ is given by
\begin{gather}
\theta^1 =\omega_1-\frac{1}{F''}\left(F'\omega_2-\omega_3\right),\qquad
\theta^2 =\frac{1}{F''}\left(F'\omega_2-\omega_3\right),\nonumber\\
\theta^3 =\left(1-\frac{F' F^{(3)} }{4 (F'')^2 }\right)\omega_2+\frac{F^{(3)}}{4 (F'')^2}\omega_3,\nonumber\\
\theta^4 =\left(\frac{7 (F^{(3)})^2-4 F''F^{(4)}}{40(F'')^3}\right)\left(F' \omega_2-\omega_3\right)+\omega_4-\omega_5,\qquad
\theta^5 =-\omega_4,\label{coframe2}
\end{gather}
(cf.~\cite{annur}) and Nurowski's metric
\begin{gather}\label{nurmetric}
g_{\mathcal{D}_{F(q)}}=2\theta^1 \theta^5-2\theta^2 \theta^4+\frac{4}{3}\theta^3 \theta^3
\end{gather}
has vanishing Weyl tensor (and hence conformally f\/lat) if\/f $\mathcal{D}_{F(q)}$ has the split real form of~$G_2$ as its group of local symmetries. There is a more elegant way to present Nurowski's metric for the distribution~$\mathcal{D}_{F(q)}$. Equivalently, we can encode the distribution by
\begin{gather}
\tilde\omega_1 =\omega_1=\der y-p \der x,\nonumber\\
\tilde\omega_2 =\frac{1}{F''}(F'\omega_2-\omega_3)=\frac{1}{F''}\left(F'(\der p-q \der x)-(\der z-F(q) \der x)\right),\nonumber\\
\tilde\omega_3 =\omega_2=\der p-q \der x,\label{coframe3}
\end{gather}
with $\tilde\omega_4=\omega_4$ and $\tilde \omega_5=\omega_5$. The coframe is then given by
\begin{gather*}
\theta^1 =\tilde \omega_1-\tilde \omega_2, \qquad
\theta^2=\tilde \omega_2,\qquad
\theta^3=\tilde \omega_3-\frac{F^{(3)}}{4F'' }\tilde \omega_2,\\
\theta^4 =\left(\frac{7 (F^{(3)})^2-4 F''F^{(4)}}{40(F'')^2}\right)\tilde \omega_2+\tilde \omega_4-\tilde \omega_5, \qquad
\theta^5=-\tilde \omega_4.
\end{gather*}
Let us take
\begin{gather*}
F(q)=\iint e^{\frac{1}{2}\int I(q) \der q} \der q \der q,
\end{gather*}
as in (\ref{intp}). This gives $F''= e^{\frac{1}{2} \int I(q) \der q}$. Nurowski's metric now has a very simple form given by
\begin{gather}
g_{\mathcal{D}_{F(q)}} =2\theta^1 \theta^5-2\theta^2 \theta^4+\frac{4}{3}\theta^3 \theta^3\nonumber\\
\hphantom{g_{\mathcal{D}_{F(q)}}}{} =2\tilde \omega_2 \tilde \omega_5-2\tilde \omega_1 \tilde \omega_4+\frac{4}{3}(\tilde\omega_3)^2-\frac{I}{3}\tilde \omega_2 \tilde \omega_3+\frac{1}{10}\left(I'-\frac{I^2}{6}\right)(\tilde \omega_2)^2.\label{nurmetric1}
\end{gather}
The Ricci tensor of the above metric is
\begin{align*}
R_{ab}\theta^a\theta^b=\frac{9}{120}\big(6I'-I^2\big)(\tilde \omega_4)^2.
\end{align*}
We can consider conformal rescalings of the metric such that $\hat g_{\mathcal{D}_{F(q)}}=\Omega^2 g_{\mathcal{D}_{F(q)}}$ is Ricci f\/lat. It turns out that if we take $\Omega=\nu(q)^{-1}>0$, then the Ricci tensor of the rescaled metric $\hat g_{\mathcal{D}_{F(q)}}$ is given by
\begin{align}\label{ricciflat}
R_{ab}\theta^a\theta^b=\frac{3}{40\nu}\big(40 \nu''+\big(6I'-I^2\big)\nu\big)(\tilde \omega_4)^2,
\end{align}
so the appropriate conformal scale $\nu(q)$ can be found by solving the dif\/ferential equation in~(\ref{ricciflat}) (cf.\ \cite[Proposition~35]{tw13}).
In the f\/irst part of this section we consider the conformally f\/lat metrics~(\ref{nurmetric1}) obtained by solving~(\ref{3rds}) using the solution~(\ref{original}). Next, we then consider the solutions obtained from dif\/ferent parametrisations of the generalised Chazy equation given by~(\ref{yes}),~(\ref{schwarz}) and~(\ref{y411}). We also consider conformally f\/lat metrics obtained from solving the Legendre transform of~(\ref{3rds}). This involves computing the coframe for the metric under the Legendre transform. Finally, we consider the metrics obtain from Chazy's solutions given by~(\ref{nsol}). The metrics associated to $(2,3,5)$-distributions $\mathcal{D}_{F(q)}$ of the form $F(q)=q^m$ where $m \in \big\{{-}1,\frac{1}{3},\frac{2}{3},2\big\}$ are given in~\cite{new}.

\subsection{Chazy's solution}
In order to express Nurowski's metric associated to f\/lat $(2,3,5)$-distributions obtained from solving~(\ref{3rds}), we have to switch independent variable $s$ and dependent variable~$q$. In other words we pass to coordinates $(x,y,z,p,s)$ with $q(s)=\frac{z_2(s)}{z_1(s)}$ where~$z_1(s)$, $z_2(s)$ are given in Corollary~\ref{I235}.

Let us f\/irst consider solutions of the form
\begin{gather*}
I(q(s))=6 \frac{\der}{\der q} \log z_1(s)
\end{gather*}
as in Corollary \ref{I235}.
Observe that
\begin{gather*}
\int I(q) \der q=6 \log z_1
\end{gather*}
and so
\begin{gather*}
F(q)=\iint {\rm e}^{\frac{1}{2}\int I(q) \der q } \der q \der q=\iint (z_1)^3 \der q \der q.
\end{gather*}
For this parametrisation we have
\begin{gather*}
\der q=\frac{z_1 \dot{z}_2-z_2 \dot{z}_1}{(z_1)^2}\der s,
\end{gather*}
so that
\begin{gather*}
F(q(s))=\int \bigg(\int z_1(z_1 \dot{z}_2-z_2 \dot{z}_1 )\der s\bigg)\frac{z_1 \dot{z}_2-z_2 \dot{z}_1}{(z_1)^2}\der s.
\end{gather*}
Let us denote
\begin{gather*}
K(s)=\int z_1(z_1 \dot{z}_2-z_2 \dot{z}_1 )\der s.
\end{gather*}

\begin{Theorem}\label{flat235}
Let $z_1(s)$, $z_2(s)$ be two linearly independent solutions to~\eqref{hypergeom} with $(a,b,c)$ given by one of the list in Corollary~{\rm \ref{I235}}.
Let $\mathcal{D}_C$ denote the $(2,3,5)$-distribution on $M_{xyzps}$ associated to the annihilator of
\begin{gather*}
\tilde \omega_1 =\der y-p \der x,\\
\tilde \omega_2 =\frac{K(s)}{(z_1)^3}\left(\der p-\frac{z_2}{z_1}\der x\right)-\frac{1}{(z_1)^3}\left(\der z-\bigg(\int K(s)\frac{z_1 \dot{z}_2-z_2 \dot{z}_1}{(z_1)^2}\der s\bigg) \der x\right),\\
\tilde \omega_3 =\der p-\frac{z_2}{z_1}\der x.
\end{gather*}
Supplement by the $1$-forms
\begin{gather*}
\tilde \omega_4=\frac{z_1 \dot{z}_2-z_2 \dot{z}_1}{(z_1)^2}\der s, \qquad \tilde \omega_5=\der x.
\end{gather*}
Then Nurowski's metric~\eqref{nurmetric1}
\begin{align*}
g_{\mathcal{D}_C}
&=2\tilde \omega_2 \tilde \omega_5-2\tilde \omega_1 \tilde \omega_4+\frac{4}{3}(\tilde\omega_3)^2-\frac{2 z_1 \dot{z}_1}{z_1 \dot{z}_2-z_2 \dot{z}_1}\tilde \omega_2 \tilde \omega_3\\
&\quad+\frac{3}{5}\left(\frac{ (z_1)^3 \ddot{z}_1}{(z_1 \dot{z}_2-z_2 \dot{z}_1)^2}-\frac{(z_1)^3 \dot{z}_2(z_1\ddot{z}_2-z_2\ddot{z}_1)}{(z_1 \dot{z}_2-z_2 \dot{z}_1)^3}\right)(\tilde \omega_2)^2
\end{align*}
has vanishing Weyl tensor $($and hence conformally flat$)$ and $\mathcal{D}_C$ has the split real form of~$G_2$ as its group of local symmetries.
\end{Theorem}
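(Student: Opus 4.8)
The strategy is to exhibit $\mathcal{D}_C$ as a distribution $\mathcal{D}_{F(q)}$ for an appropriate single function $F$, and then to run the chain of substitutions of the introduction in reverse. By Corollary~\ref{I235}, the function $I(q)=6\frac{\der}{\der q}\log z_1$, with $q=q(s)=z_2(s)/z_1(s)$ and $z_1$, $z_2$ linearly independent solutions of~\eqref{hypergeom} for one of the listed triples $(a,b,c)$, solves the generalised Chazy equation~\eqref{3rds}. Undoing the substitutions $I=2j$, $j=G'$, $E=\mathrm{e}^{G}$, $E=F''$ that took~\eqref{6thode} into~\eqref{3rds}, one concludes that $F(q)=\iint \mathrm{e}^{\frac{1}{2}\int I(q)\,\der q}\,\der q\,\der q$ satisfies the $6^{\rm th}$ order ODE~\eqref{6thode}. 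Since the left-hand side of~\eqref{6thode} is precisely the fundamental Cartan curvature invariant of $\mathcal{D}_{F(q)}$, equivalently the Weyl tensor of Nurowski's metric recalled around~\eqref{nurmetric}, it follows (cf.~\cite[Corollary~2.1]{annur}) that $\mathcal{D}_{F(q)}$ is flat: its Nurowski metric is conformally flat and its local symmetry algebra is the split real form of $G_2$. On the open set where $z_1\neq0$ we have $F''=\mathrm{e}^{\frac{1}{2}\int I\,\der q}\neq0$, so $\mathcal{D}_{F(q)}$ is a genuine $(2,3,5)$-distribution there.

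It then remains to rewrite $F$ and Nurowski's metric in the coordinate $s$. From $\int I\,\der q=6\log z_1$ one gets $F''=(z_1)^3$; writing $W=z_1\dot z_2-z_2\dot z_1$ for the Wronskian, the relation $\der q=\frac{W}{(z_1)^2}\,\der s$ gives $F'=\int (z_1)^3\,\der q=\int z_1 W\,\der s=K(s)$ and $F=\int K(s)\frac{W}{(z_1)^2}\,\der s$. Substituting $F$, $F'$, $F''$ into the compact coframe~\eqref{coframe3} and into Nurowski's metric~\eqref{nurmetric1} for $\mathcal{D}_{F(q)}$ reproduces verbatim the $1$-forms $\tilde\omega_1,\dots,\tilde\omega_5$ of the statement.

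For the metric itself, the terms $2\tilde\omega_2\tilde\omega_5-2\tilde\omega_1\tilde\omega_4+\frac{4}{3}(\tilde\omega_3)^2$ carry over unchanged. Using $\frac{\der}{\der q}=\frac{(z_1)^2}{W}\frac{\der}{\der s}$, one computes $I=6\frac{\der}{\der q}\log z_1=\frac{6z_1\dot z_1}{W}$, so the coefficient $-\frac{I}{3}$ of $\tilde\omega_2\tilde\omega_3$ becomes $-\frac{2z_1\dot z_1}{W}$ as claimed. Differentiating $I$ a second time with respect to $q$, using $\dot W=z_1\ddot z_2-z_2\ddot z_1$, gives $I'-\frac{I^2}{6}=\frac{6(z_1)^3}{W^3}\big(\ddot z_1 W-\dot z_1\dot W\big)$; dividing by $10$ and expanding $W$ and $\dot W$ then yields the coefficient of $(\tilde\omega_2)^2$ in $g_{\mathcal{D}_C}$, which may be simplified further with the hypergeometric equation for $z_1$, $z_2$ if desired. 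This second differentiation, together with the bookkeeping needed to cast the coframe in the displayed form, is the only part requiring genuine computation and is where the main effort lies; the conformal flatness and the split $G_2$ symmetry are then immediate from the first paragraph.
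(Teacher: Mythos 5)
Your proposal is correct and follows essentially the same route as the paper: Corollary~\ref{I235} supplies $I(q)=6\frac{\der}{\der q}\log z_1$ solving \eqref{3rds}, reversing the substitutions $I=2G'$, $E={\rm e}^G=F''$ shows $F(q)=\iint (z_1)^3\,\der q\,\der q$ solves \eqref{6thode}, so the Cartan invariant (equivalently the Weyl tensor of \eqref{nurmetric}) vanishes and split $G_2$ symmetry follows from \cite[Corollary~2.1]{annur}, while the displayed coframe and metric are exactly \eqref{coframe3} and \eqref{nurmetric1} rewritten in $s$ via $\der q=\frac{z_1\dot{z}_2-z_2\dot{z}_1}{(z_1)^2}\der s$ and $F'=K(s)$, which is precisely the paper's implicit argument. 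One remark: writing $W=z_1\dot{z}_2-z_2\dot{z}_1$, your (correct) identity $I'-\frac{I^2}{6}=\frac{6(z_1)^3}{W^3}\bigl(\ddot{z}_1 W-\dot{z}_1\dot{W}\bigr)$ gives the $(\tilde\omega_2)^2$ coefficient as $\frac{3}{5}\bigl(\frac{(z_1)^3\ddot{z}_1}{W^2}-\frac{(z_1)^3\dot{z}_1(z_1\ddot{z}_2-z_2\ddot{z}_1)}{W^3}\bigr)$, i.e.\ with $\dot{z}_1$ rather than the $\dot{z}_2$ printed in the theorem, so your computation in fact corrects what appears to be a typo in the displayed formula.
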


\looseness=-1
Let us provide an explicit example given by Corollary~\ref{I235}.
It turns out for the values of $(a,b,c)$ obtained in Corollary~\ref{I235}, the solutions can be given by elementary functions.
For $(a,b,c)=\big({-}\frac{2}{3},\frac{5}{6},\frac{1}{2}\big)$, the solutions to the hypergeometric dif\/ferential equation~(\ref{hypergeom}) are given by
\begin{gather*}
z(s)=\mu(s-1)^{\frac{1}{6}}P^{\frac{1}{3}}_{1}(\sqrt{s})+\nu(s-1)^{\frac{1}{6}}Q^{\frac{1}{3}}_{1}(\sqrt{s}),
\end{gather*}
where $P^m_{\ell}$ and $Q^m_{\ell}$ are the associated Legendre functions.
This suggest passing further to the variable $r=\sqrt{s}$, in which case (\ref{hypergeom}) with $(a,b,c)=\big({-}\frac{2}{3},\frac{5}{6},\frac{1}{2}\big)$ becomes
\begin{gather}\label{trans}
\frac{1}{4}(1-r^2) z''(r)-\frac{1}{3}r z'(r)+\frac{5}{9}z(r)=0.
\end{gather}
The general solution is now given by the elementary functions
\begin{gather*}
z(r)=c_1 (r-1)^{\frac{1}{3}}(3 r+1)+c_2 (r+1)^{\frac{1}{3}}(3r-1).
\end{gather*}
There is thus a 4-dimensional space of solutions given by
\begin{gather*}
z_1(r) =c_1 (r-1)^{\frac{1}{3}}(3 r+1)+c_2 (r+1)^{\frac{1}{3}}(3r-1),\\
z_2(r) =c_3 (r-1)^{\frac{1}{3}}(3 r+1)+c_4 (r+1)^{\frac{1}{3}}(3r-1),
\end{gather*}
where $c_1 c_4-c_2 c_3 \neq 0$.
We also note that in the case $(a,b,c)=\big({-}\frac{2}{3},\frac{5}{6},\frac{2}{3}\big)$, the change of variable $r=\sqrt{1-s}$ brings (\ref{hypergeom}) to (\ref{trans}), so that the two hypergeometric ODEs with $(a,b,c)=\big({-}\frac{2}{3},\frac{5}{6},\frac{1}{2}\big)$ and $\big({-}\frac{2}{3},\frac{5}{6},\frac{2}{3}\big)$ can be brought to the same equation~(\ref{trans}) by a coordinate transformation.

Moreover, since
\begin{gather*}
{}_2{F}_1\left(-\frac{2}{3},\frac{5}{6};\frac{2}{3};4s(1-s)\right)={}_2{F}_1\left(-\frac{4}{3},\frac{5}{3};\frac{2}{3};s\right),
\end{gather*}
if we take $r=\sqrt{1-4s(1-s)}=2s-1$, we can pass from~(\ref{hypergeom}) with $(a,b,c)=\big({-}\frac{4}{3},\frac{5}{3};\frac{2}{3}\big)$ to equation~(\ref{trans}). Also compare with the second row of Table~\ref{table1}.

We now pass to coordinates on $M_{xyzpr}$ and take for a simple example
\begin{gather*}
z_1(r)=c_1 (r-1)^{\frac{1}{3}}(3 r+1),\qquad z_2(r)=c_2 (r+1)^{\frac{1}{3}}(3r-1)
\end{gather*}
where~$c_1$,~$c_2$ are non-zero constants.
We also make use of
\begin{gather*}
\der s=2 r \der r.
\end{gather*}
Here we have $
K(r)=-16(c_1)^2 c_2(r-1)^{\frac{2}{3}}(r+1)^{\frac{1}{3}}$.
We have

\begin{Proposition}\label{flatexample}
Let $c_1$, $c_2$ be non-zero constants.
Let $\mathcal{D}_0$ denote the $(2,3,5)$-distribution on $M_{xyzpr}$ associated to the annihilator of
\begin{gather*}
\tilde \omega_1 =\der y-p \der x,\\
\tilde \omega_2 =\frac{-16c_2(r+1)^{\frac{1}{3}}}{c_1(r-1)^{\frac{1}{3}}(3 r+1)^3}\der p
-\frac{1}{(c_1)^3(r-1)(3 r+1)^3}\der z+\frac{16(c_2)^2(r+1)^{\frac{2}{3}}}{(c_1)^2(r-1)^{\frac{2}{3}}(3 r+1)^3}\der x,\\
\tilde \omega_3 =\der p-\frac{c_2 (r+1)^{\frac{1}{3}}(3r-1)}{c_1 (r-1)^{\frac{1}{3}}(3 r+1)}\der x,
\end{gather*}
and supplemented by the $1$-forms
\begin{gather*}
\tilde \omega_4=-\frac{16 c_2}{3c_1 (3r+1)^2 (r-1)^{\frac{4}{3}}(r+1)^{\frac{2}{3}}}\der r, \qquad \tilde \omega_5=\der x.
\end{gather*}
Then Nurowski's metric~\eqref{nurmetric1}
\begin{gather*}
g_{\mathcal{D}_0}
 =2\tilde \omega_2 \tilde \omega_5-2\tilde \omega_1 \tilde \omega_4+\frac{4}{3}(\tilde\omega_3)^2+\frac{c_1(3r-2)(3r+1)(r-1)^{\frac{1}{3}}(r+1)^{\frac{2}{3}}}{2 c_2}\tilde \omega_2 \tilde \omega_3\\
\hphantom{g_{\mathcal{D}_0}=}{}
+\frac{3(c_1)^2 (r-1)^{\frac{5}{3}} (3r+1)^4 (r+1)^{\frac{1}{3}}}{64 (c_2)^2}(\tilde \omega_2)^2
\end{gather*}
has vanishing Weyl tensor $($and hence conformally flat$)$ and $\mathcal{D}_0$ has the split real form of~$G_2$ as its group of local symmetries. The Ricci tensor for this metric is
\begin{gather*}
R_{ab}\theta^a\theta^b=\frac{6}{r^2-1}\der r \der r.
\end{gather*}
Rescaling this metric by
\begin{gather*}
\Omega=\frac{1}{\nu}=\frac{4}{3}\frac{(3r+1)(r-1)^{\frac{1}{3}}}{a_1(r-1)^{\frac{1}{3}}-a_2(r+1)^{\frac{1}{3}}},
\end{gather*}
where $a_1$ and $a_2$ are constants,
the conformally rescaled metric $\hat g_{\mathcal{D}_0}=\Omega^2 g_{\mathcal{D}_0}$ given by
\begin{gather*}
\hat g_{\mathcal{D}_0} =\frac{16(3r+1)^2(r-1)^{\frac{2}{3}}}{9(a_1(r-1)^{\frac{1}{3}}-a_2(r+1)^{\frac{1}{3}})^2}\left(2\tilde \omega_2 \tilde \omega_5-2\tilde \omega_1 \tilde \omega_4+\frac{4}{3}(\tilde\omega_3)^2\right.\\
\left.\hphantom{\hat g_{\mathcal{D}_0} =}{}
+\frac{c_1(3r-2)(3r+1)(r-1)^{\frac{1}{3}}(r+1)^{\frac{2}{3}}}{2 c_2}\tilde \omega_2 \tilde \omega_3+\frac{3(c_1)^2 (r-1)^{\frac{5}{3}} (3r+1)^4 (r+1)^{\frac{1}{3}}}{64 (c_2)^2}(\tilde \omega_2)^2\!\right)\!
\end{gather*}
is both Ricci-flat and conformally flat.
\end{Proposition}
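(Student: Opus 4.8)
The statement is a concrete instance of Theorem~\ref{flat235}, so the plan is to specialise that general construction to the explicit elementary solutions available for $(a,b,c)=\big({-}\tfrac23,\tfrac56,\tfrac12\big)$, perform the change of variable $s=r^2$, and then verify the two curvature assertions by direct computation. The input is as follows: by the discussion around~(\ref{trans}), the substitution $r=\sqrt{s}$ turns~(\ref{hypergeom}) with $(a,b,c)=\big({-}\tfrac23,\tfrac56,\tfrac12\big)$ into~(\ref{trans}), whose general solution is $c_1(r-1)^{\frac13}(3r+1)+c_2(r+1)^{\frac13}(3r-1)$. Hence I would take $z_1(r)=c_1(r-1)^{\frac13}(3r+1)$ and $z_2(r)=c_2(r+1)^{\frac13}(3r-1)$ as two solutions; these are linearly independent precisely because $c_1c_2\neq0$, since the $r$-Wronskian of~(\ref{trans}) is a nonzero constant multiple of $(r-1)^{-\frac23}(r+1)^{-\frac23}$, which vanishes nowhere.

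Next I would do the bookkeeping of the change of variable. Using $\der s=2r\,\der r$ one has $z_1\dot z_2-z_2\dot z_1=\frac{1}{2r}(z_1z_2'-z_2z_1')$ and $K(r)=\int z_1(z_1z_2'-z_2z_1')\,\der r$, and a one-line differentiation gives $z_1z_2'-z_2z_1'=-\frac{16c_1c_2}{3}(r-1)^{-\frac23}(r+1)^{-\frac23}$, hence $K(r)=-16(c_1)^2c_2(r-1)^{\frac23}(r+1)^{\frac13}$. Substituting $z_1$, $z_2$ and $K$ into the coframe and metric of Theorem~\ref{flat235} and simplifying reproduces the $1$-forms $\tilde\omega_1,\dots,\tilde\omega_5$ and the metric $g_{\mathcal D_0}$ of the statement; in particular the $\tilde\omega_2\tilde\omega_3$-coefficient equals $-\tfrac13 I$ with $I=6\frac{\der}{\der q}\log z_1=-\frac{3c_1}{2c_2}(3r-2)(3r+1)(r-1)^{\frac13}(r+1)^{\frac23}$, and the $(\tilde\omega_2)^2$-coefficient equals $\tfrac1{60}(6I'-I^2)$. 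Conformal flatness of $g_{\mathcal D_0}$ and the split $G_2$ symmetry of $\mathcal D_0$ are then immediate from Theorem~\ref{flat235}.

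For the Ricci tensor, (\ref{nurmetric1}) gives $R_{ab}\theta^a\theta^b=\tfrac3{40}\big(6I'-I^2\big)(\der q)^2$ with primes in $q$; inserting the explicit $I$ together with $\der q=q'\,\der r$, where $q'=-\frac{16c_2}{3c_1}(r-1)^{-\frac43}(r+1)^{-\frac23}(3r+1)^{-2}$, and simplifying yields $\big(6I'-I^2\big)(q')^2=\frac{80}{r^2-1}$, hence $R_{ab}\theta^a\theta^b=\frac6{r^2-1}\,\der r\,\der r$. For the conformal rescaling I would invoke~(\ref{ricciflat}): with $\Omega=\nu^{-1}$ the metric $\hat g_{\mathcal D_0}$ is Ricci-flat iff $40\nu''+\big(6I'-I^2\big)\nu=0$ in the variable $q$. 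Rewriting this in $r$ with the previous identity turns it into the Fuchsian equation
\begin{gather*}
\nu''+\left(\frac{4}{3(r-1)}+\frac{2}{3(r+1)}+\frac{6}{3r+1}\right)\nu'+\frac{2}{r^2-1}\,\nu=0,
\end{gather*}
for which one checks that $\nu_1=(3r+1)^{-1}$ and $\nu_2=\frac{(r+1)^{\frac13}}{(3r+1)(r-1)^{\frac13}}$ are independent solutions, so the general solution is the stated $\nu=\frac34\frac{a_1(r-1)^{\frac13}-a_2(r+1)^{\frac13}}{(3r+1)(r-1)^{\frac13}}$. Then $\hat g_{\mathcal D_0}=\Omega^2g_{\mathcal D_0}$ is Ricci-flat by~(\ref{ricciflat}), and it remains conformally flat since the Weyl tensor is a conformal invariant.

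The only genuinely delicate point is the last one: rewriting the second-order conformal-to-Einstein ODE in the $r$-variable and recognising that the exhibited two-parameter family exhausts its solution space. Everything else is routine — the change of variable $s=r^2$ and a handful of explicit differentiations and integrations of powers of $(r-1)$, $(r+1)$ and $(3r+1)$ — so I would carry out those parts only to the extent needed to confirm that the coframe, the metric, and the Ricci tensor come out exactly as stated.
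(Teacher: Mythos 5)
Your proposal is correct and follows essentially the same route as the paper: the paper obtains Proposition~\ref{flatexample} precisely by specialising Theorem~\ref{flat235} to $z_1(r)=c_1(r-1)^{\frac13}(3r+1)$, $z_2(r)=c_2(r+1)^{\frac13}(3r-1)$ with $\der s=2r\,\der r$ (computing $K(r)=-16(c_1)^2c_2(r-1)^{\frac23}(r+1)^{\frac13}$ as you do), and then reading off the metric coefficients from~(\ref{nurmetric1}) and the conformal scale from the equation $40\nu''+(6I'-I^2)\nu=0$ in~(\ref{ricciflat}). Your intermediate quantities (the Wronskian $-\tfrac{16c_1c_2}{3}(r-1)^{-\frac23}(r+1)^{-\frac23}$, the identity $(6I'-I^2)(q')^2=\tfrac{80}{r^2-1}$, and the two solutions $\nu_1$, $\nu_2$ of the transformed Fuchsian equation) all check out and reproduce the stated formulas.
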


\subsection{Other parametrisations of the generalised Chazy equation}\label{opara}

Instead of choosing $I(q)=6\frac{\der}{\der q}\log z_1$, we can
consider the parametrisation
\begin{gather*}
I(q)=\frac{\der}{\der q}\log \frac{\dot{s}^3}{s^2(s-1)^2}
\end{gather*}
given in~(\ref{yes}).
In this case
\begin{gather*}
F(q)=\iint {\rm e}^{\frac{1}{2}\int I(q) \der q } \der q \der q=\iint \frac{\dot{s}^{\frac{3}{2}}}{s(s-1)} \der q \der q
\end{gather*}
and the corresponding metric associated to the $(2,3,5)$-distribution $\mathcal{D}_{F(q)}$ gives the following

\begin{Theorem}\label{flat235s}
Let $s(q)$ be a solution to \eqref{schwarzian} with $(\alpha,\beta,\gamma)$ given by one of
\begin{gather*}
 (3, 3, 3 ),\qquad
\left(3, \frac{1}{3}, \frac{1}{3}\right),\qquad
\left(\frac{1}{3 }, 3,\frac{1}{3}\right),\qquad
\left(\frac{1}{3}, \frac{1}{3},3\right).
\end{gather*}
Let $\mathcal{D}_{s}$ denote the $(2,3,5)$-distribution on $M_{xyzpq}$ associated to the annihilator of
\begin{gather*}
\omega_1 =\der y-p \der x,\qquad
\omega_2 =\der p-q\der x,\qquad
\omega_3 =\der z-\bigg(\iint \frac{\dot{s}^{\frac{3}{2}}}{s(s-1)} \der q \der q\bigg) \der x.
\end{gather*}
Supplement by the $1$-forms
\begin{gather*}
\omega_4=\der q, \qquad \omega_5=\der x,
\end{gather*}
and take the coframe on $M_{xyzpq}$ to be given by $(\theta^1, \theta^2, \theta^3, \theta^4, \theta^5)$ as in~\eqref{coframe2} where
\begin{gather*}
F(q)=\iint \frac{\dot{s}^{\frac{3}{2}}}{s(s-1)} \der q \der q.
\end{gather*}
Then Nurowski's metric
\begin{gather*}
g_{\mathcal{D}_{s}}=2\theta^1 \theta^5-2\theta^2 \theta^4+\frac{4}{3}\theta^3 \theta^3
\end{gather*}
has vanishing Weyl tensor $($and hence conformally flat$)$ and~$\mathcal{D}_{s}$ has the split real form of~$G_2$ as its group of local symmetries.
\end{Theorem}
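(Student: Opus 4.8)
\textbf{Proof proposal for Theorem \ref{flat235s}.}

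The plan is to reduce this statement to Theorem \ref{flat235} (equivalently to the general machinery built around equation (\ref{nurmetric1})) by showing that the function $F(q)$ built from the parametrisation (\ref{yes}) is, after the substitution $E(q)=F''(q)$ and the reductions carried out in the Introduction, a solution of the $6^{\rm th}$ order ODE (\ref{6thode}). First I would recall from Section \ref{fos} that when $(\alpha,\beta,\gamma)$ is one of the four triples listed --- namely $(3,3,3)$ and the cyclic family $\big(3,\tfrac13,\tfrac13\big)$, $\big(\tfrac13,3,\tfrac13\big)$, $\big(\tfrac13,\tfrac13,3\big)$, which is exactly the specialisation of the list in (\ref{yes}) to the Chazy parameter $k=\tfrac23$ --- the function $I(q)=\tfrac{\der}{\der q}\log\dfrac{\dot s^{\,3}}{s^2(s-1)^2}$ solves the generalised Chazy equation (\ref{3rds}) with $k^2=\tfrac49$, as was established there via the first-order system (\ref{chazy1}) and the Schwarzian equation (\ref{schwarzian}). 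Running the chain of substitutions in the Introduction in reverse, $j=\tfrac{I}{2}$, $G'=j$, $E=\mathrm e^{G}$, $F''=E$, then shows that $F(q)=\iint \mathrm e^{\frac12\int I(q)\der q}\der q\der q=\iint \dfrac{\dot s^{\,3/2}}{s(s-1)}\der q\der q$ satisfies (\ref{6thode}), with $F''\neq0$ on the open set where $\dot s$ is nonzero (so the genericity hypothesis holds and $\mathcal D_{F(q)}$ is a genuine $(2,3,5)$-distribution).

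Next I would invoke the computation of the Cartan curvature invariant from \cite{annur}: for a distribution of the form $\mathcal D_{F(q)}$ the fundamental invariant is precisely the left-hand side of (\ref{6thode}), so its vanishing is equivalent to $F$ solving that ODE. Combined with Nurowski's result in \cite{conf,annur} that the Weyl tensor of the metric (\ref{nurmetric}) (equivalently (\ref{nurmetric1}), which is the same metric rewritten in the coframe (\ref{coframe3})) equals this curvature invariant, we conclude that $g_{\mathcal D_s}$ is conformally flat. The equivalence ``Weyl tensor vanishes $\iff$ split $G_2$ acts as the local symmetry group'' is the statement recorded just below (\ref{nurmetric}), and is the content of \cite[Corollary~2.1]{annur}; applying it gives the symmetry claim. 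Since here we are using the coframe $(\theta^1,\dots,\theta^5)$ of (\ref{coframe2}) built directly from $\omega_1,\omega_2,\omega_3,\omega_4,\omega_5$ and $F(q)=\iint\dfrac{\dot s^{\,3/2}}{s(s-1)}\der q\der q$, no change of independent variable is needed and the statement follows verbatim.

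The only genuine obstacle is bookkeeping rather than substance: one must check that the passage from (\ref{yes}) to the concrete formula $\mathrm e^{\frac12\int I\der q}=\dfrac{\dot s^{\,3/2}}{s(s-1)}$ is legitimate, i.e. that $\int I(q)\der q=\log\dfrac{\dot s^{\,3}}{s^2(s-1)^2}$ holds on the nose (this is immediate from the definition of $I$ in (\ref{yes}) as a logarithmic derivative), and that the four triples in the theorem are exactly those for which Section \ref{fos} guarantees $I$ solves (\ref{3rds}) at $k=\tfrac23$ --- which was already verified there. One should also note that $F$ is only determined up to the addition of an affine function of $q$ (the two constants of integration), but such an additive ambiguity changes $F$ by a term with $F''=0$ contribution and leaves both the distribution's curvature invariant and Nurowski's conformal class unchanged, so the conclusion is insensitive to that choice. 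With these remarks the proof is a direct consequence of the material already in place.
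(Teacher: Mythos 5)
Your proposal is correct and follows essentially the same route as the paper: the theorem is stated there as the immediate outcome of noting that, for $k=\frac{2}{3}$, the parametrisation~(\ref{yes}) solves~(\ref{3rds}) exactly for the four listed triples $(\alpha,\beta,\gamma)$, so that $F(q)=\iint \mathrm{e}^{\frac12\int I\,\der q}\,\der q\,\der q=\iint \dot{s}^{3/2}s^{-1}(s-1)^{-1}\,\der q\,\der q$ solves~(\ref{6thode}) by reversing the reduction of the Introduction, whence the Cartan invariant, and hence the Weyl tensor of~(\ref{nurmetric}), vanishes and \cite[Corollary~2.1]{annur} gives split $G_2$ symmetry. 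Your additional remarks on genericity ($F''\neq 0$) and the integration constants are consistent with this (note the inner constant rescales $F''$ multiplicatively rather than affinely, but either ambiguity leaves the homogeneous ODE and the conformal class unaffected).
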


To obtain explicit examples, it is useful to switch the independent variable~$q$ and the dependent variable $s$. We pass to the variables $(x,y,z,p,s)$ with
$q=\frac{u_2(s)}{u_1(s)}$ where $u_1$ and $u_2$ are linearly independent solutions of~(\ref{ode1}) given in Table~\ref{table1}.
Note that up to fractional linear transformations in the variable~$s$, we only need to consider the solutions to (\ref{schwarzian}) with the values of $(\alpha,\beta,\gamma)$ given by either $(3, 3, 3)$ or $
\big(3, \frac{1}{3}, \frac{1}{3}\big)$ for the parametrisation given by~(\ref{yes}).
Note the symmetry permuting~$\beta$ and~$\gamma$.

A computation shows that $W(u_1,u_2)=u_1\dot{u}_2-\dot{u}_1u_2$ is constant, which we can normalise to set $W(u_1,u_2)=1$.
We have
\begin{gather*}
\der q=\frac{u_1 \dot{u}_2-u_2 \dot{u}_1}{(u_1)^2}\der s=\frac{1}{(u_1)^2}\der s
\end{gather*}
and
\begin{gather*}
\dot{s}=\frac{1}{q'(s)}=(u_1)^2,
\end{gather*}
so that
\begin{gather*}
F(q(s))=\int \bigg(\int \frac{(u_1)^3}{s(s-1)}\frac{1}{(u_1)^2}\der s\bigg)\frac{1}{(u_1)^2}\der s=\int \bigg(\int \frac{u_1}{s(s-1)}\der s\bigg)\frac{1}{(u_1)^2}\der s.
\end{gather*}
Let us denote
\begin{gather*}
K(s)=\int \frac{u_1}{s(s-1)}\der s.
\end{gather*}

\begin{Theorem}\label{flat235s1}
Let $u_1(s)$, $u_2(s)$ be two linearly independent solutions to \eqref{ode1} subject to the constraint $W(u_1,u_2)=1$ with $(\alpha,\beta,\gamma)$ given by Table~{\rm \ref{table1}}.
Let $\mathcal{D}_s$ denote the $(2,3,5)$-distribution on $M_{xyzps}$ associated to the annihilator of
\begin{gather*}
\omega_1 =\der y-p \der x,\qquad
\omega_2 =\der p-\frac{u_2(s)}{u_1(s)}\der x,\qquad
\omega_3 =\der z-\bigg(\int \frac{K(s)}{(u_1)^2}\der s\bigg) \der x.
\end{gather*}
We pass to the annihilator $1$-forms given by \eqref{coframe3} to obtain
\begin{gather*}
\tilde \omega_1=\omega_1=\der y-p \der x,\qquad \tilde \omega_2=\frac{s(s-1)}{(u_1)^3}(K \omega_2-\omega_3),\qquad
\tilde \omega_3=\omega_2=\der p-\frac{u_2(s)}{u_1(s)}\der x,
\end{gather*}
with
\begin{gather*}
\tilde \omega_4=\omega_4=\frac{1}{(u_1)^2}\der s, \qquad \tilde \omega_5=\omega_5=\der x.
\end{gather*}
Take the coframe on $M_{xyzps}$ to be given by
\begin{gather*}
\theta^1 =\tilde\omega_1-\tilde \omega_2,\qquad
\theta^2=\tilde \omega_2,\qquad \theta^3=\tilde \omega_3-\frac{(u_1)^2}{4s(s-1)}\left(3\frac{\dot{u}_1}{u_1}s(s-1)-(2s-1)\right)\tilde \omega_2,\\
\theta^4 =\frac{(u_1)^4}{40}\left(\frac{4s^2-4s-1}{s^2(s-1)^2}+3 V(s)-\frac{10(2s-1)}{s(s-1)}\frac{\dot{u}_1}{u_1}+15\left(\frac{\dot{u}_1}{u_1}\right)^2\right)\tilde \omega_2 +\tilde \omega_4-\tilde \omega_5,\\
\theta^5 =-\tilde\omega_4.
\end{gather*}
Then Nurowski's metric
\begin{gather*}
g_{\mathcal{D}_s} =2\theta^1 \theta^5-2\theta^2 \theta^4+\frac{4}{3}\theta^3 \theta^3\\
\hphantom{g_{\mathcal{D}_s}}{} =2 \tilde \omega_2 \tilde \omega_5-2 \tilde \omega_1 \tilde \omega_4+\frac{4}{3}(\tilde \omega_3)^2+\frac{2 u_1(3 s(1-s)u_1'+(2s-1) u_1)}{3s (s-1)}\tilde \omega_2 \tilde \omega_3\\
\hphantom{g_{\mathcal{D}_s} =}{} -\frac{(u_1)^4 (9 V(s) s^2(s-1)^2-8(s^2-s+1))}{60 (s-1)^2 s^2}(\tilde \omega_2)^2
\end{gather*}
has vanishing Weyl tensor $($and hence conformally flat$)$ and $\mathcal{D}_s$ has the split real form of~$G_2$ as its group of local symmetries.
\end{Theorem}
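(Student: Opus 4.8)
The plan is to start from the explicit, already-established formula (\ref{nurmetric1}) for Nurowski's metric attached to $\mathcal{D}_{F(q)}$ with $F''=e^{\frac12\int I(q)\,\der q}$, and simply track what each ingredient becomes under the change of variables in which $q$ becomes a dependent variable, $q=\frac{u_2(s)}{u_1(s)}$, with $u_1,u_2$ solving (\ref{ode1}) and normalised by $W(u_1,u_2)=1$. The key computational inputs are already listed in the excerpt: $\der q=\frac{1}{(u_1)^2}\,\der s$, $\dot{s}=(u_1)^2$, and the parametrisation $I(q)=\frac{\der}{\der q}\log\frac{\dot{s}^3}{s^2(s-1)^2}$ from (\ref{yes}). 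From these one gets $e^{\frac12\int I\,\der q}=\frac{\dot{s}^{3/2}}{s(s-1)}$, hence $F(q(s))=\int\big(\int\frac{u_1}{s(s-1)}\,\der s\big)\frac{1}{(u_1)^2}\,\der s=\int\frac{K(s)}{(u_1)^2}\,\der s$ with $K(s)=\int\frac{u_1}{s(s-1)}\,\der s$, which fixes $\omega_3$ as stated.

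First I would rewrite the annihilator coframe (\ref{coframe3}) in the new coordinates. Since $F'' = e^{\frac12\int I\,\der q}=\frac{\dot s^{3/2}}{s(s-1)}=\frac{(u_1)^3}{s(s-1)}$, we have $\tilde\omega_2=\frac{1}{F''}(F'\omega_2-\omega_3)=\frac{s(s-1)}{(u_1)^3}(F'\omega_2-\omega_3)$, and since $F'=\int(u_1)^3\,\der q\,/\,\ldots$ — more directly $F'=\int e^{\frac12\int I\,\der q}\der q=\int\frac{u_1}{s(s-1)}\,\der s=K(s)$ — this is exactly $\tilde\omega_2=\frac{s(s-1)}{(u_1)^3}(K\omega_2-\omega_3)$. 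Then $\tilde\omega_4=\omega_4=\der q=\frac{1}{(u_1)^2}\der s$ as claimed. Next I would compute $\frac{F^{(3)}}{4F''}$ and $\frac{7(F^{(3)})^2-4F''F^{(4)}}{40(F'')^2}$ in terms of $s$; the cleanest route is to use $F''=e^{\frac12\int I\,\der q}$ so that $\frac{F^{(3)}}{F''}=\frac{I}{2}$ and $\frac{F^{(4)}}{F''}=\frac{I'}{2}+\frac{I^2}{4}$, giving $\frac{F^{(3)}}{4F''}=\frac{I}{8}$ and the $\theta^4$-coefficient $=\frac{1}{40}\big(6\,(\tfrac{I}{2})^2-4(\tfrac{I'}{2}+\tfrac{I^2}{4})\big)=\frac{1}{40}(\tfrac{3I^2}{2}-2I'-I^2)=-\frac{1}{80}(4I'-I^2)$; I would then re-express $I$ and $I'$ via (\ref{yes}), namely $I=\frac{\der}{\der q}\log\frac{\dot s^3}{s^2(s-1)^2}$, using $\frac{\der}{\der q}=(u_1)^2\frac{\der}{\der s}$ and $\dot s=(u_1)^2$, which turns $I$ into a rational expression in $s$, $u_1$, $\dot u_1$. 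Feeding this into the stated $\theta^3$ and $\theta^4$ should reproduce the formulas with the combination $3V(s)$ appearing — here the link is (\ref{schwarzian}): the Schwarzian term hidden in $\ddot s/\dot s$ and its derivative is precisely what produces $V(s)$, so I would substitute $\{s,q\}=-\frac12\dot s^2 V(s)$ at the appropriate moment.

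Finally, plugging the resulting coframe into $g=2\theta^1\theta^5-2\theta^2\theta^4+\frac43\theta^3\theta^3$ and collecting terms in the basis $\tilde\omega_i\tilde\omega_j$ should give the displayed metric; equivalently, since (\ref{nurmetric1}) already expresses $g$ in that basis as $2\tilde\omega_2\tilde\omega_5-2\tilde\omega_1\tilde\omega_4+\frac43(\tilde\omega_3)^2-\frac{I}{3}\tilde\omega_2\tilde\omega_3+\frac{1}{10}(I'-\frac{I^2}{6})(\tilde\omega_2)^2$, one only needs to substitute the $s$-expressions for $I$ and $I'$: $-\frac{I}{3}=\frac{2u_1(3s(1-s)u_1'+(2s-1)u_1)}{3s(s-1)}$ and $\frac{1}{10}(I'-\frac{I^2}{6})=-\frac{(u_1)^4(9V(s)s^2(s-1)^2-8(s^2-s+1))}{60(s-1)^2s^2}$, the second again using (\ref{schwarzian}) to introduce $V(s)$. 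The conformal flatness and the split-$G_2$ symmetry claim is then immediate: by Proposition~\ref{gct}/Corollary~\ref{I235} the function $I(q)$ solves (\ref{3rds}), which is equivalent (tracing back through the reductions $I\mapsto j\mapsto G\mapsto E=F''$ in the Introduction) to $F$ solving (\ref{6thode}), i.e.\ the Cartan curvature invariant of $\mathcal{D}_{F(q)}$ vanishes; by Nurowski's theorem (the statement around (\ref{nurmetric})) this is exactly the condition that the Weyl tensor of $g_{\mathcal{D}_{F(q)}}$ vanishes and that the distribution has split $G_2$ as its local symmetry group. The main obstacle is purely organisational: carrying the Schwarzian bookkeeping through $\ddot s/\dot s$, $\dddot s$ cleanly enough that the $V(s)$ terms emerge in the stated rational form without sign or factor errors — there is no conceptual difficulty, only the need to keep the substitution $\{s,q\}+\frac{\dot s^2}{2}V(s)=0$ consistently applied.
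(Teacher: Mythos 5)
Your proposal is correct and follows essentially the same route as the paper: invert $q=\frac{u_2}{u_1}$ with $W(u_1,u_2)=1$, substitute the parametrisation \eqref{yes} into the coframe \eqref{coframe2} and the metric \eqref{nurmetric1}, and use \eqref{ode1} (equivalently \eqref{schwarzian}) to trade $\ddot u_1$ for $V(s)$, with flatness following because $I$ solves the generalised Chazy equation with $k=\pm\frac{2}{3}$ --- though that last fact for the Table~\ref{table1} values comes from the statement surrounding \eqref{yes} rather than from Corollary~\ref{I235}. One small arithmetic slip: the $\theta^4$-coefficient is $\frac{7(F^{(3)})^2-4F''F^{(4)}}{40(F'')^2}=\frac{1}{40}\big(\frac{3I^2}{4}-2I'\big)$, not $-\frac{1}{80}\big(4I'-I^2\big)$ (you used $6$ in place of $7$); with the correct factor this reproduces the stated $\theta^4$ and is consistent with the $\frac{1}{10}\big(I'-\frac{I^2}{6}\big)$ coefficient in \eqref{nurmetric1} that you correctly match at the end.
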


The analogous results hold for $I(q)$ given by the formulas in (\ref{schwarz}), (\ref{schwarz2}) and (\ref{y411}).
If we take
\begin{gather*}
I(q)=\frac{\der}{\der q}\log \frac{\dot{s}^3}{s^2(s-1)^{\frac{3}{2}}},
\end{gather*}
as in (\ref{schwarz}),
we have the corresponding $(2,3,5)$-distribution associated to
\begin{gather*}
F(q)=\iint \frac{\dot{s}^{\frac{3}{2}}}{s(s-1)^{\frac{3}{4}}} \der q \der q.
\end{gather*}

\begin{Theorem}\label{flat235j}
Let $s(q)$ be a solution to \eqref{schwarzian} with $(\alpha,\beta,\gamma)$ given by $\big(\frac{3}{2}, \frac{1}{3}, \frac{1}{2}\big)$, $\big(\frac{3}{2}, 3, \frac{1}{2}\big)$ or $\big(\frac{3}{2}, \frac{1}{3},\frac{9}{2}\big)$.
Let $\mathcal{D}_{s_1}$ denote the $(2,3,5)$-distribution on $M_{xyzpq}$ associated to the annihilator of
\begin{gather*}
\omega_1 =\der y-p \der x,\qquad
\omega_2 =\der p-q\der x,\qquad
\omega_3 =\der z-\bigg(\iint \frac{\dot{s}^{\frac{3}{2}}}{s(s-1)^{\frac{3}{4}}} \der q \der q\bigg) \der x.
\end{gather*}
Supplement by the $1$-forms
\begin{gather*}
\omega_4=\der q, \qquad \omega_5=\der x,
\end{gather*}
and take the coframe on $M_{xyzpq}$ to be given by $(\theta^1, \theta^2, \theta^3, \theta^4, \theta^5)$ as in~\eqref{coframe2}.
Then Nurowski's metric~\eqref{nurmetric}
has vanishing Weyl tensor $($and hence conformally flat$)$ and $\mathcal{D}_{s_1}$ has the split real form of~$G_2$ as its group of local symmetries.
\end{Theorem}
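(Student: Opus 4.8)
\emph{Proof strategy for Theorem~\ref{flat235j}.}

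The plan is to verify that the function $F(q)=\iint \frac{\dot s^{3/2}}{s(s-1)^{3/4}}\,\der q\,\der q$ satisfies the $6^{\text{th}}$ order ODE~\eqref{6thode}; granting this, the conclusion is immediate. Indeed, by Nurowski~\cite{conf} (see the discussion following~\eqref{nurmetric}), the fundamental Cartan curvature invariant of $\mathcal{D}_{F(q)}$ --- which is precisely the left-hand side of~\eqref{6thode} --- appears as the Weyl tensor of Nurowski's metric~\eqref{nurmetric}, and it vanishes exactly when $\mathcal{D}_{F(q)}$ has the split real form of $G_2$ as its group of local symmetries; so $g_{\mathcal{D}_{s_1}}$ is then conformally flat and $\mathcal{D}_{s_1}$ has the desired symmetry algebra.

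To see that $F$ solves~\eqref{6thode}, I retrace the substitutions from the introduction that reduce~\eqref{6thode} to the generalised Chazy equation~\eqref{3rds}: working on an open set where $F''\neq 0$ and applying, in turn, $E=F''$, $E={\rm e}^{G}$, $G'=j$ and $j=I/2$ shows that $F$ satisfies~\eqref{6thode} if and only if $I(q):=2\big(\log F''(q)\big)'$ satisfies~\eqref{3rds}, i.e.\ the generalised Chazy equation with parameter $k=\frac23$. For the $F$ in question $F''(q)=\frac{\dot s^{3/2}}{s(s-1)^{3/4}}$, so
\begin{gather*}
I(q)=2\big(\log F''\big)'=\frac{\der}{\der q}\log\frac{\dot s^{3}}{s^{2}(s-1)^{3/2}}=\frac12\frac{\der}{\der q}\log\frac{\dot s^{6}}{s^{4}(s-1)^{3}},
\end{gather*}
which is exactly the parametrisation~\eqref{schwarz}. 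It then remains to invoke the fact recorded immediately after~\eqref{schwarz}: for $k=\frac23$ --- so that $\frac1k=\frac32$, $\frac2k=3$, $\frac3k=\frac92$ --- the expression $\frac12\frac{\der}{\der q}\log\frac{\dot s^{6}}{s^{4}(s-1)^{3}}$ solves~\eqref{3rds} precisely when $s(q)$ solves the Schwarzian equation~\eqref{schwarzian} with $(\alpha,\beta,\gamma)$ equal to one of $\big(\frac32,\frac13,\frac12\big)$, $\big(\frac32,3,\frac12\big)$, $\big(\frac32,\frac13,\frac92\big)$, which are exactly the three triples in the statement. Chaining the equivalences yields that $F$ solves~\eqref{6thode}, and the theorem follows.

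The substance of the argument is contained in that last parametrisation claim, which I expect to be the only real obstacle --- though it is already essentially carried out in the text preceding the theorem. Concretely one substitutes the expressions for $\Omega_1,\Omega_2,\Omega_3$ in terms of $s(q)$ into $y=-\Omega_1-2\Omega_2-3\Omega_3$ and checks, using the first-order system~\eqref{chazy1} with $\tau^2$ formed from the given $(\alpha,\beta,\gamma)$, that~\eqref{gc0} with $k=\frac23$ holds; equivalently, one matches the resulting hypergeometric exponents $(a,b,c)$ against the list in Proposition~\ref{gct}. A minor point worth noting is that $F$ is only determined up to addition of a function linear in $q$, but~\eqref{6thode}, and hence the entire conclusion, is insensitive to this affine ambiguity, so the double indefinite integral defining $F$ poses no problem.
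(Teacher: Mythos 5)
Your proposal is correct and follows the same route the paper takes (the paper gives no separate proof of Theorem~\ref{flat235j}; it relies exactly on the chain you describe): reduce~\eqref{6thode} to the generalised Chazy equation~\eqref{3rds} via $I=2(\log F'')'$, identify $I$ for this $F$ with the parametrisation~\eqref{schwarz}, and use the fact that~\eqref{schwarz} solves~\eqref{gc0} with $k=\tfrac{2}{3}$ precisely for $(\alpha,\beta,\gamma)\in\big\{\big(\tfrac32,\tfrac13,\tfrac12\big),\big(\tfrac32,3,\tfrac12\big),\big(\tfrac32,\tfrac13,\tfrac92\big)\big\}$, so that the Cartan invariant, i.e.\ the Weyl tensor of Nurowski's metric, vanishes. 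Your remarks on the sign of $F''$ and the affine ambiguity in the double integral are sensible and consistent with the paper's treatment.
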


Similarly, for $I(q)$ given by~(\ref{y411}), that is to say
\begin{gather*}
I(q)=\frac{\der}{\der q}\log \frac{\dot{s}^3}{s^{\frac{5}{2}}(s-1)^{\frac{5}{2}}},
\end{gather*}
we have the corresponding $(2,3,5)$-distribution associated to
\begin{gather*}
F(q)=\iint \frac{\dot{s}^{\frac{3}{2}}}{s^{\frac{5}{4}}(s-1)^{\frac{5}{4}}} \der q \der q.
\end{gather*}

\begin{Theorem}\label{flat235k}
Let $s(q)$ be a solution to~\eqref{schwarzian} with $(\alpha,\beta,\gamma)$ given by
$\big(6, \frac{3}{2}, \frac{3}{2}\big)$, $
\big(\frac{2}{3}, \frac{3}{2}, \frac{3}{2}\big)$ or
$\big(\frac{2}{3 }, \frac{1}{6},\frac{1}{6}\big)$.
Let $\mathcal{D}_{s_2}$ denote the $(2,3,5)$-distribution on $M_{xyzpq}$ associated to the annihilator of
\begin{gather*}
\omega_1 =\der y-p \der x,\qquad
\omega_2 =\der p-q\der x,\qquad
\omega_3 =\der z-\bigg(\iint \frac{\dot{s}^{\frac{3}{2}}}{s^{\frac{5}{4}}(s-1)^{\frac{5}{4}}} \der q \der q\bigg) \der x.
\end{gather*}
Supplement by the $1$-forms
\begin{align*}
\omega_4=\der q, \qquad \omega_5=\der x,
\end{align*}
and take the coframe on $M_{xyzpq}$ to be given by $(\theta^1, \theta^2, \theta^3, \theta^4, \theta^5)$ as in~\eqref{coframe2}.
Then Nurowski's metric~\eqref{nurmetric}
has vanishing Weyl tensor $($and hence conformally flat$)$ and~$\mathcal{D}_{s_2}$ has the split real form of~$G_2$ as its group of local symmetries.
\end{Theorem}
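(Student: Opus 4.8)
The plan is to deduce Theorem~\ref{flat235k} from the reduction of the $6^{\rm th}$ order ODE~\eqref{6thode} to the generalised Chazy equation~\eqref{3rds} carried out in the introduction, together with Nurowski's identification of the fundamental Cartan invariant with the Weyl tensor of the metric~\eqref{nurmetric}. For $F(q)=\iint \frac{\dot{s}^{\frac{3}{2}}}{s^{\frac{5}{4}}(s-1)^{\frac{5}{4}}} \der q \der q$ we have $F''(q)=\frac{\dot{s}^{\frac{3}{2}}}{s^{\frac{5}{4}}(s-1)^{\frac{5}{4}}}$, which is non-vanishing on any open set where $s$, $s-1$ and $\dot{s}$ are non-zero, so the genericity condition $F''\neq 0$ holds and $\mathcal{D}_{s_2}$ is a genuine $(2,3,5)$-distribution. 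Writing $F''={\rm e}^{G}$ as in the introduction gives, up to an additive constant, $G=\frac{3}{2}\log\dot{s}-\frac{5}{4}\log s-\frac{5}{4}\log(s-1)$, so that $I(q):=2G'(q)=\frac{\der}{\der q}\log\frac{\dot{s}^{3}}{s^{\frac{5}{2}}(s-1)^{\frac{5}{2}}}$ is precisely the parametrisation~\eqref{y411}.

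First I would check that this $I(q)$ solves~\eqref{3rds}. By the substitutions $E=F''={\rm e}^{G}$, $j=G'$ and $I=2j$ made in the introduction, $F$ satisfies~\eqref{6thode} if and only if $I$ satisfies~\eqref{3rds}, so it suffices to verify the latter. Taking $s=s(q)$ a solution of the Schwarzian equation~\eqref{schwarzian}, equivalently the first order system~\eqref{chazy1} for $\Omega_1,\Omega_2,\Omega_3$ with $\Omega_1=-\frac{1}{2}\frac{\der}{\der q}\log\frac{\dot{s}}{s(s-1)}$, $\Omega_2=-\frac{1}{2}\frac{\der}{\der q}\log\frac{\dot{s}}{s-1}$, $\Omega_3=-\frac{1}{2}\frac{\der}{\der q}\log\frac{\dot{s}}{s}$, one substitutes $I=-4\Omega_1-\Omega_2-\Omega_3$ into~\eqref{3rds} and eliminates the higher $q$-derivatives of the $\Omega_i$ using~\eqref{chazy1}. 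As recorded in Section~\ref{fos}, the resulting identity holds exactly when $(\alpha,\beta,\gamma)$ is one of $\big(\frac{4}{k},\frac{1}{k},\frac{1}{k}\big)$, $\big(\frac{2}{3},\frac{1}{k},\frac{1}{k}\big)$, $\big(\frac{2}{3},\frac{1}{6},\frac{1}{6}\big)$ with $k=\frac{2}{3}$, i.e.\ $\big(6,\frac{3}{2},\frac{3}{2}\big)$, $\big(\frac{2}{3},\frac{3}{2},\frac{3}{2}\big)$ or $\big(\frac{2}{3},\frac{1}{6},\frac{1}{6}\big)$ --- exactly the three triples in the statement; the last of these corresponds to $I(q)=-\frac{6}{q}$ and may be checked directly.

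It then remains to conclude the geometry. Since $I(q)$ solves~\eqref{3rds}, $F(q)$ solves~\eqref{6thode}, so the fundamental Cartan curvature invariant of $\mathcal{D}_{F(q)}$, which by~\cite{annur} equals the left hand side of~\eqref{6thode}, vanishes. By~\cite{conf} this invariant is the Weyl tensor of Nurowski's metric~\eqref{nurmetric} associated to $\mathcal{D}_{F(q)}$, so $g_{\mathcal{D}_{s_2}}$ has vanishing Weyl tensor and is conformally flat; and by~\cite[Corollary~2.1]{annur} the vanishing of the Cartan invariant forces the local symmetry algebra of $\mathcal{D}_{s_2}$ to be maximal, namely the split real form of~$G_2$.

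The main obstacle is the bookkeeping in the second step: one expresses $\Omega_1,\Omega_2,\Omega_3$ through $s$, plugs the combination $-4\Omega_1-\Omega_2-\Omega_3$ into~\eqref{3rds}, and then repeatedly uses~\eqref{chazy1} (equivalently~\eqref{schwarzian}) to reduce everything to a polynomial identity in the $\Omega_i$ and $\tau^2$ which must vanish for, and only for, the three listed parameter triples. This computation is mechanical but lengthy, of the same nature as the one behind Proposition~\ref{gct}; the remaining ingredients are all recalled above.
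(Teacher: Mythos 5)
Your argument is correct and follows the same route the paper takes (implicitly, since Theorem~\ref{flat235k} is stated as a consequence of the earlier analysis): the substitutions $E=F''={\rm e}^G$, $I=2G'$ from the introduction identify $I$ with the parametrisation~\eqref{y411}, Section~\ref{fos} supplies exactly the three triples $(\alpha,\beta,\gamma)$ for which this solves~\eqref{3rds} at $k=\frac{2}{3}$, and the conclusion follows from the identification of the Cartan invariant with the left hand side of~\eqref{6thode} and with the Weyl tensor of Nurowski's metric, plus \cite[Corollary~2.1]{annur}. No gaps beyond the routine verification you already flag.
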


\subsection{Legendre transformed coframe}\label{ltc}
The Legendre transform of Proposition \ref{leg1} takes the 1-forms (\ref{coframe}) to
\begin{gather*}
\omega_1 =\der y-p \der x,\qquad
\omega_2 =\der p-H' \der x,\qquad
\omega_3 =\der z-(t H'-H) \der x,\\
\omega_4 =H'' \der t,\qquad
\omega_5 =\der x,
\end{gather*}
where $H=H(t)$ with $H'' \neq 0$ on $M_{xyzpt}$
and the coframe~(\ref{coframe2}) to
\begin{gather*}
\theta^1 =\omega_1-H''(t \omega_2-\omega_3),\qquad
\theta^2 = H''(t \omega_2-\omega_3),\qquad
\theta^3 =\left(1+t \frac{H'''}{4 H''}\right)\omega_2-\frac{H'''}{4 H''}\omega_3,\\
\theta^4 =\frac{4 H'' H''''-5(H''')^2}{40 (H'')^3}(t \omega_2-\omega_3)+\omega_4-\omega_5,\qquad
\theta^5 =-\omega_4.
\end{gather*}
Note that our $H(t)$ is related to $\Theta(x_5)$ of \cite{annur} via $\Theta_{55}=-H$, $t=x_5$.
The Nurowski metric
\begin{gather*}
g=2\theta^1 \theta^5-2 \theta^2 \theta^4+\frac{4}{3} \theta^3 \theta^3
\end{gather*}
has the only non-vanishing component of the Weyl tensor given by the left hand side term of the dual ODE (\ref{ds6}). This accounts for the appearance of equation (\ref{4thode}) in \cite{cartan1910}. The solutions of the dual generalised Chazy ODE (\ref{gc2}) with parameter $\pm \frac{3}{2}$ give us further examples of f\/lat $(2,3,5)$-distributions. We pass to $(x,y,z,p,s)$ as before, with $t(s)=\frac{w_2(s)}{w_1(s)}$ where $w_1(s)$, $w_2(s)$ are linearly independent solutions to~(\ref{hypergeom}) with $(a,b,c)$ one of
\begin{gather*}
\left(-\frac{1}{4 }, \frac{5}{12}, \frac{1}{2}\right),\qquad
\left(-\frac{1}{4}, \frac{5}{12}, \frac{2}{3}\right),\qquad
\left(-\frac{1}{2 }, \frac{5}{6},\frac{2}{3}\right).
\end{gather*}
Here we have taken $k=\frac{3}{2}$.
Note that the equations (\ref{hypergeom}) for $(a,b,c)=\big({-}\frac{1}{4 }, \frac{5}{12}, \frac{1}{2}\big)$ and $\big({-}\frac{1}{4}, \frac{5}{12}, \frac{2}{3}\big)$ are equivalent by a linear transformation and thus the solutions to each equation can be expressed as linear combinations of the other, while the solutions for $(a,b,c)=\big({-}\frac{1}{4}, \frac{5}{12}, \frac{2}{3}\big)$ and
$\big({-}\frac{1}{2 }, \frac{5}{6},\frac{2}{3}\big)$
are equivalent by a quadratic transformation as before. However, the author does not know if the solutions in these cases can be expressed by elementary functions. We consider once again solutions to~(\ref{gc2}) of the form $u(t)=6\frac{\der}{\der t}\log w_1$.
This gives
\begin{gather*}
H(t)=\iint (w_1)^4 \der t \der t, \qquad
H'(t)=\int (w_1)^4 \der t, \qquad H''(t)=(w_1)^4.
\end{gather*}
For this parametrisation we have
\begin{gather*}
\der t=\frac{w_1 \dot{w}_2-w_2 \dot{w}_1}{(w_1)^2} \der s
\end{gather*}
and so
\begin{gather*}
H=\iint (w_1)^2 (w_1 \dot{w}_2-w_2 \dot{w}_1) \der s \frac{w_1 \dot{w}_2-w_2 \dot{w}_1}{(w_1)^2} \der s,\qquad H'=\int (w_1)^2 (w_1 \dot{w}_2-w_2 \dot{w}_1) \der s.
\end{gather*}
We therefore obtain
\begin{gather*}
\omega_1 =\der y-p \der x,\qquad
\omega_2 =\der p-\int (w_1)^2 (w_1 \dot{w}_2-w_2 \dot{w}_1) \der s\der x,\\
\omega_3 =\der z-\left(\frac{w_2}{w_1} \int (w_1)^2 (w_1 \dot{w}_2-w_2 \dot{w}_1) \der s\right.\\
\left.\hphantom{\omega_3 =}{} -\iint (w_1)^2 (w_1 \dot{w}_2-w_2 \dot{w}_1) \der s \frac{w_1 \dot{w}_2-w_2 \dot{w}_1}{(w_1)^2} \der s\right) \der x,\\
\omega_4 =(w_1)^2 (w_1 \dot{w}_2-w_2 \dot{w}_1) \der s,\qquad
\omega_5 =\der x
\end{gather*}
and the adapted coframe for Nurowski's metric is
\begin{gather}
\theta^1 =\omega_1-(w_1)^4\left(\frac{w_2}{w_1} \omega_2-\omega_3\right),\qquad
\theta^2 = (w_1)^4\left(\frac{w_2}{w_1} \omega_2-\omega_3\right),\nonumber\\
\theta^3 =\left(1+\frac{w_2 \dot{w}_1}{w_1 \dot{w}_2-w_2\dot{w}_1}\right)\omega_2-\frac{w_1 \dot{w}_1}{w_1 \dot{w}_2-w_2\dot{w}_1}\omega_3,\nonumber\\
\theta^4
 =\frac{2(\ddot{w}_1\dot{w}_2-\ddot{w}_2 \dot{w}_1)}{5(w_1 \dot{w}_2-w_2 \dot{w}_1)^3}\left(\frac{w_2}{w_1} \omega_2-\omega_3\right)+\omega_4-\omega_5,\qquad
\theta^5 =-\omega_4. \label{lgco}
\end{gather}
Equivalently, we can take
\begin{align*}
\tilde\omega_1=\omega_1, \qquad \tilde\omega_2=(w_1)^4\left(\frac{w_2}{w_1} \omega_2-\omega_3\right),\qquad
\tilde\omega_3=\omega_2
\end{align*}
with $\tilde\omega_4=\omega_4$ and $\tilde \omega_5=\omega_5$. The coframe is then given by
\begin{gather}
\theta^1 =\tilde \omega_1-\tilde \omega_2, \qquad
\theta^2=\tilde \omega_2,\qquad
\theta^3=\tilde \omega_3+\frac{ \dot{w}_1}{(w_1)^3(w_1 \dot{w}_2-w_2\dot{w}_1)}\tilde \omega_2,\nonumber\\
\theta^4 =\frac{2(\ddot{w}_1\dot{w}_2-\ddot{w}_2 \dot{w}_1)}{5(w_1)^4(w_1 \dot{w}_2-w_2 \dot{w}_1)^3}\tilde \omega_2+\tilde \omega_4-\tilde \omega_5, \qquad
\theta^5=-\tilde \omega_4.\label{lgcoframe2}
\end{gather}

\begin{Proposition}\label{dualchazy}
The Nurowski metric
\begin{gather*}
g=2\theta^1 \theta^5-2 \theta^2 \theta^4+\frac{4}{3} \theta^3 \theta^3
\end{gather*}
given by the above coframe~\eqref{lgco} for $w_1(s)$, $w_2(s)$ linearly independent solutions to the hypergeometric differential equation~\eqref{hypergeom} with $(a,b,c)$ one of
\begin{gather*}
\left(-\frac{1}{4 }, \frac{5}{12}, \frac{1}{2}\right),\qquad
\left(-\frac{1}{4}, \frac{5}{12}, \frac{2}{3}\right),\qquad
\left(-\frac{1}{2 }, \frac{5}{6},\frac{2}{3}\right)
\end{gather*}
are all conformally flat. For each $(a,b,c)$ there is a $4$-dimensional family of solutions. Up to fractional linear transformation in the variable~$s$ there are~$2$ distinct classes given by the last two entries.
\end{Proposition}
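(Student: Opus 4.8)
The plan is to obtain Proposition~\ref{dualchazy} from Proposition~\ref{gct} in the same way that Theorem~\ref{flat235} was obtained from Corollary~\ref{I235}, but now applied to the Legendre-dual equation~\eqref{ds6}. First I would note that the three triples $(a,b,c)$ in the statement are exactly the list of Proposition~\ref{gct} evaluated at the Chazy parameter $k=\frac{3}{2}$: one checks $\frac{k-6}{12k}=-\frac{1}{4}$, $\frac{k+6}{12k}=\frac{5}{12}$, $\frac{k-6}{6k}=-\frac{1}{2}$ and $\frac{k+6}{6k}=\frac{5}{6}$. Hence by Proposition~\ref{gct} the function $u(t)=6\frac{\der}{\der t}\log w_1$ solves the dual generalised Chazy equation~\eqref{gc2}.

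Next I would run in reverse the chain of substitutions that produced~\eqref{gc2} from~\eqref{ds6}: from $u=\frac{3}{2} v$ one recovers $v=p'$ solving~\eqref{ds3ode}, then $y^{(3)}={\rm e}^{p}$ solving~\eqref{4thode}, and finally $H=y'$ solving the sixth order ODE~\eqref{ds6}. Unwinding these relations gives $H''(t)=(w_1)^4$ and $H(t)=\iint (w_1)^4\,\der t\,\der t$, with the genericity condition $H''\neq 0$ holding because $w_1$ is nonvanishing on a generic open set. Using $\der t=\frac{w_1\dot{w}_2-w_2\dot{w}_1}{(w_1)^2}\der s$ one then rewrites $H$, $H'$ and $H''$ in the variable $s$, which is precisely the data entering the Legendre-transformed coframe~\eqref{lgco} (equivalently~\eqref{lgcoframe2}).

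Conformal flatness then follows from the discussion preceding the proposition: by Proposition~\ref{leg1} together with Nurowski's result that the Weyl tensor of $g_{\mathcal{D}_{F(q)}}$ is a nonzero multiple of the left hand side of~\eqref{6thode}, the only nonvanishing component of the Weyl tensor of the metric built from the coframe~\eqref{lgco} is a multiple of the left hand side of the dual ODE~\eqref{ds6}. Since $H(t)=\iint (w_1)^4\,\der t\,\der t$ solves~\eqref{ds6}, this component vanishes and $g$ is conformally flat; by Nurowski's theorem the associated $(2,3,5)$-distribution then has the split real form of $G_2$ as its group of local symmetries. For the counting, for each fixed $(a,b,c)$ the equation~\eqref{hypergeom} has a two-dimensional solution space, and choosing a linearly independent pair $(w_1,w_2)$ amounts to choosing a basis, i.e.\ an invertible $2\times 2$ matrix, which gives the stated four-dimensional family. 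The map $s\mapsto 1-s$ takes~\eqref{hypergeom} with $(a,b,c)=\big({-}\frac{1}{4},\frac{5}{12},\frac{1}{2}\big)$ to~\eqref{hypergeom} with $(a,b,c)=\big({-}\frac{1}{4},\frac{5}{12},\frac{2}{3}\big)$, since $a+b+1-c=\frac{7}{6}-\frac{1}{2}=\frac{2}{3}$, so the first two entries give a single class up to a fractional linear change of $s$; the second and third are related only by a quadratic (non-fractional-linear) transformation and so are genuinely distinct, leaving exactly two classes, represented by the last two entries.

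The main obstacle is hidden rather than absent: the identification of the Weyl tensor with the left hand side of~\eqref{ds6} is already available through Proposition~\ref{leg1} and the preceding discussion, so what remains above is routine bookkeeping of the substitutions and elementary hypergeometric parameter checks. A fully self-contained argument would instead require computing the Weyl tensor of the metric with coframe~\eqref{lgco} directly in terms of $w_1$, $w_2$ and the hypergeometric equation they satisfy, and that curvature computation is the genuinely laborious step.
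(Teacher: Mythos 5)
Your proposal is correct and follows essentially the same route as the paper: the paper's own justification for Proposition~\ref{dualchazy} is precisely the discussion of Section~\ref{ltc} — Legendre-transforming the coframe via Proposition~\ref{leg1}, noting the sole Weyl component is the left-hand side of~\eqref{ds6}, taking $u=6\frac{\der}{\der t}\log w_1$ with the $k=\frac{3}{2}$ triples from Proposition~\ref{gct} so that $H=\iint (w_1)^4\,\der t\,\der t$ solves~\eqref{ds6}, and counting the $4$-parameter choice of $(w_1,w_2)$ together with the linear ($s\mapsto 1-s$) versus quadratic relations among the three hypergeometric equations. Your parameter checks and the identification of the two classes up to fractional linear transformations match the paper's argument.
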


In addition, the Legendre transformation of Lemma \ref{legendre} given by
\begin{gather*}
w_1 =(z_1)^{-\frac{3}{4}},\qquad
w_2 =(z_1)^{-\frac{3}{4}} \int z_1 (z_1 \dot{z}_2-z_2\dot{z}_1)\der s
\end{gather*}
takes the coframe (\ref{lgcoframe2}) to the coframe in Theorem~\ref{flat235} and conversely so.
The Legendre transform also applies to the coframes given in Theorems~\ref{flat235s}, \ref{flat235j} and~\ref{flat235k}. We also have the analogous results of Section~\ref{opara}. Up to fractional linear transformations in~$s$ we have~$7$ classes of solutions to the generalised Chazy equation~(\ref{gc2}) determined by $s(t)$ satisfying (\ref{schwarzian}). These are given by the parametrisations in Section~\ref{fos} and the corresponding values for $(\alpha,\beta,\gamma)$ can be computed for the parameter $k=\frac{3}{2}$. For the parametrisation
\begin{gather*}
H(t)=\iint \frac{\dot{s}^2}{s^{\frac{4}{3}}(s-1)^{\frac{4}{3}}} \der t \der t,
\end{gather*}
the values for $(\alpha,\beta,\gamma)$ are given by either $\big(\frac{4}{3}, \frac{4}{3}, \frac{4}{3}\big)$ or $\big(\frac{4}{3}, \frac{1}{3}, \frac{1}{3}\big)$.
For
\begin{gather*}
H(t)=\iint \frac{\dot{s}^2}{s^{\frac{4}{3}}(s-1)} \der t \der t,
\end{gather*}
$(\alpha,\beta,\gamma)$ takes the values of $\big(\frac{2}{3 }, \frac{1}{2},\frac{1}{3}\big)$,
$\big(\frac{2}{3}, \frac{1}{2},\frac{4}{3}\big)$ or
$\big(\frac{2}{3}, 2,\frac{1}{3}\big)$.
For
\begin{gather*}
H(t)=\iint \frac{\dot{s}^2}{s^{\frac{5}{3}}(s-1)^{\frac{5}{3}}} \der t \der t,
\end{gather*}
we obtain $\big(\frac{8}{3}, \frac{2}{3}, \frac{2}{3}\big)$ or
$\big(\frac{2}{3}, \frac{2}{3}, \frac{2}{3}\big)$.

The Legendre transform therefore provides seven further classes of f\/lat Nurowski metrics up to fractional linear transformations in~$s$.

\subsection{Additional examples}

The solution (\ref{nsol}) for $k=\pm\frac{2}{3}$ gives
$I(q)=-\frac{8}{3(q+C)}-\frac{10}{3(q+B)}$. Hence, the metric~(\ref{nurmetric1}) on $M_{xyzpq}$ given by
\begin{gather*}
g_{\mathcal{D}_{F(q)}} =2\tilde \omega_2 \tilde \omega_5-2\tilde \omega_1 \tilde \omega_4+\frac{4}{3}(\tilde\omega_3)^2\\
\hphantom{g_{\mathcal{D}_{F(q)}} =}{}
+\left(\frac{8}{9(q+C)}+\frac{10}{9(q+B)}\right)\tilde \omega_2 \tilde \omega_3+\frac{4(B-C)^2}{27(q+B)^2 (q+C)^2}(\tilde \omega_2)^2
\end{gather*}
is conformally f\/lat. In the dual coframe the f\/lat metric~(\ref{nurmetric1}) on $M_{xyzpt}$ is given by
\begin{gather}
g
 =2\tilde \omega_2 \tilde \omega_5-2\tilde \omega_1 \tilde \omega_4+\frac{4}{3}(\tilde\omega_3)^2\nonumber\\
\hphantom{g=}{}+\frac{4}{9}u(t) e^{\int -\frac{2}{3}u(t) \der t}\tilde \omega_2 \tilde \omega_3-\frac{2}{135}(9\dot u(t)-4 u(t)^2) e^{\int -\frac{4}{3}u(t) \der t}(\tilde \omega_2)^2,\label{gmetric}
\end{gather}
where $u(t)$ satisf\/ies the generalised Chazy equation (\ref{gc2}) with parameter $k=\pm\frac{3}{2}$. The solu\-tion~(\ref{nsol}) for $k=\pm\frac{3}{2}$ gives
$u(t)=-\frac{15}{4(t-a)}-\frac{9}{4(t-b)}$ and substituting this into~(\ref{gmetric}) gives the conformally f\/lat metric
\begin{gather*}
g =2\tilde \omega_2 \tilde \omega_5-2\tilde \omega_1 \tilde \omega_4+\frac{4}{3}(\tilde\omega_3)^2-\frac{256}{3}(8t-5b-3a)(t-a)^{\frac{3}{2}}(t-b)^{\frac{1}{2}}\tilde \omega_2 \tilde \omega_3\\
\hphantom{g=}{} +\frac{65536}{3}(t-b)^2(t-a)^3(4t-3a-b)(\tilde \omega_2)^2.
\end{gather*}

To summarise the results of this section, we f\/irst presented dif\/ferent examples of Nurowski metrics that are conformally f\/lat up to fractional linear transformation in the variable~$s$. Two examples are given in Theorem~\ref{flat235s}, three examples are given in Theorem~\ref{flat235j} and two more in Theorem~\ref{flat235k}. Seven additional examples are obtained from the Legendre transform as in Proposition~\ref{leg1}. We also have 2 additional examples from the solutions of the form (\ref{nsol}). Finally, there are examples associated to distributions of the form
$F(q)=q^m$, where $m \in \big\{{-}1,\frac{1}{3},\frac{2}{3},2\big\}$ and passing to the dual coframe, distributions of the form $H(t)=t^m$, where $m \in \big\{{-}2,-\frac{1}{2},\frac{1}{2},2\big\}$.

\section{An--Nurowski circle twistor bundle}\label{anexamples}

In \cite{annur0}, An and Nurowski showed how to associate to a split signature conformal structure $[g]$ on a 4-manifold $M^4$ a natural $(2,3,5)$-distribution. 4-dimensional split signature conformal structures admit real self-dual totally null 2-planes. The bundle of such 2-planes is a circle bundle over $M^4$ with f\/ibres $S^1$ \cite{annur0}. This is called the circle twistor bundle ${\mathbb T}(M^4)$ and it has a~rank~2 distribution given by lifting horizontally the null 2-planes on~$M^4$. This distribution is non-integrable, i.e., def\/ines a $(2,3,5)$-distribution whenever the self-dual part of the Weyl tensor of~$g$ on $M^4$ is non-vanishing. Moreover in~\cite{annur}, the authors presented split signature conformal structures on~$M^4$ that give rise to $(2,3,5)$-distributions of the form $\mathcal{D}_{F(q)}$ on ${\mathbb T}(M^4)$. Such split signature metrics are called Pleba\'nski's second heavenly metrics in~\cite{annur}.

Following \cite[Section~3]{annur}, we can f\/ind these metrics that have a f\/lat circle twistor bundle. Such circle twistor bundles have split $G_2$ as their group of symmetries.
Let $(w,x,y,z)$ be local coordinates on $M^4$. Let $\Theta=\Theta(w,x,y,z)$ be an arbitrary function of 4 variables (second heavenly function of Pleba\'nski).
Let $(e_i)$ be an orthonormal frame on $M^4$ and $(\theta^j)$ the dual coframe satisfying $\theta^j(e_i)=\delta^j{}_i$.
The split signature Pleba\'nski metric is given by
\begin{gather*}
g=g_{ij}\theta^i \otimes \theta^j=2\theta^1\theta^2+2\theta^3\theta^4,
\end{gather*}
where $\theta^i \theta^j=\frac{1}{2}\theta^i \otimes \theta^j+\frac{1}{2} \theta^j \otimes \theta^i$
and
\begin{gather*}
\theta^1 =\der x-\Theta_{yy}\der w+\Theta_{xy}\der z,\qquad
\theta^2 =\der w, \qquad
\theta^3 =\der y-\Theta_{xx} \der z+\Theta_{xy} \der w,\qquad
\theta^4 =\der z.
\end{gather*}
Hence \looseness=-1 $g_{12}=g_{34}=1$ and all other components are zero. Such split signature metrics admit a~real parallel spinor~\cite{nullkahler}.
A computation shows that the connection 1-forms we need are given by
\begin{gather*}
\Gamma^1{}_1 =-\Theta_{yyx}\theta^2+ \Theta_{yxx}\theta^4,\qquad
\Gamma^1{}_3 =-\Theta_{yyy}\theta^2+ \Theta_{yyx}\theta^4,\\
\Gamma^3{}_1 =\Theta_{yxx}\theta^2- \Theta_{xxx}\theta^4,\qquad
\Gamma^3{}_3 =\Theta_{yyx}\theta^2- \Theta_{yxx}\theta^4.
\end{gather*}
Using \cite{annur} and Nurowski's notes \cite{nurnotes}, we f\/ind that
the $(2,3,5)$-distribution on ${\mathbb T}(M^4)$ is annihilated by the following three $1$-forms:
\begin{align*}
\omega^3&=\der \xi+\Gamma^3{}_1+\big(\Gamma^3{}_3-\Gamma^1{}_1\big)\xi-\Gamma^1{}_3 \xi^2\\
&=\der \xi+\big(\Theta_{yxx}+2\Theta_{yyx}\xi+\Theta_{yyy}\xi^2\big)\theta^2-\big(\Theta_{xxx}+2\Theta_{yxx}\xi+H_{yyx}\xi^2\big)\theta^4\\
&=\der \xi+\big(\Theta_{yxx}+2\Theta_{yyx}\xi+\Theta_{yyy}\xi^2\big)\der w -\big(\Theta_{xxx}+2\Theta_{yxx}\xi+\Theta_{yyx}\xi^2\big)\der z
\end{align*}
and
\begin{align*}
\omega^4&=\xi \theta^4+\theta^2=\xi \der z+\der w,\\
\omega^5&=\theta^3-\xi \theta^1=\der y-\Theta_{xx}\der z+\Theta_{xy}\der w-\xi(\der x-\Theta_{yy}\der w+\Theta_{xy}\der z)\\
&=\der y-\xi \der x-(\Theta_{xx}+\xi \Theta_{xy})\der z+(\Theta_{xy}+\xi \Theta_{yy})\der w.
\end{align*}
The distribution is therefore annihilated by the 1-forms
\begin{gather*}
\tilde \omega^3 =\der \xi-\big(\Theta_{xxx}+3\Theta_{yxx}\xi+3\Theta_{yyx}\xi^2+\Theta_{yyy}\xi^3\big)\der z,\qquad
\tilde\omega^4 =\xi \der z+\der w,\\
\tilde\omega^5
 =\der y-\xi \der x-\big(\Theta_{xx}+2 \xi \Theta_{xy}+\xi^2 \Theta_{yy}\big)\der z.
\end{gather*}
Following \cite{annur}, we now pass to the new coordinates $(\tilde x,\tilde y,\tilde z,\tilde p,\tilde t)$ on ${\mathbb T}(M^4)$
\begin{gather*}
x \mapsto \tilde t, \qquad w \mapsto \tilde y \qquad z \mapsto \tilde x, \qquad -\xi \mapsto \tilde p, \qquad y \mapsto \tilde z-\tilde p \tilde t.
\end{gather*}
We obtain the distribution annihilated by the following 1-forms:
\begin{gather*}
\tilde \omega^3 =-\der \tilde p-\tilde A\der \tilde x,\qquad
\tilde\omega^4 =-\tilde p \der \tilde x+\der \tilde y,\\
\tilde\omega^5 =\der \tilde z-\tilde p \der \tilde t-\tilde t \der \tilde p+\tilde p \der \tilde t-\tilde B\der \tilde x
 =\der \tilde z-\tilde t \der \tilde p-\tilde B\der \tilde x
 =\der \tilde z+(\tilde t \tilde A-\tilde B) \der \tilde x,
\end{gather*}
where $\tilde A$ and $\tilde B$ are coordinate transforms of the functions
\begin{gather*}
A(w,x,y,z,\xi) =\Theta_{xxx}+3\Theta_{yxx}\xi+3\Theta_{yyx}\xi^2+\Theta_{yyy}\xi^3,\\
B(w,x,y,z,\xi) =\Theta_{xx}+2 \xi \Theta_{xy}+\xi^2 \Theta_{yy}
\end{gather*}
respectively.
This suggests taking
\begin{gather*}
\tilde A=-H'(t), \tilde B=-H(t)
\end{gather*}
to obtain the Legendre transformed 1-forms in Section~\ref{ltc}.
Passing back to coordinates $(w,x,y$, $z,\xi)$ on ${\mathbb T}(M^4)$, this gives
\begin{gather*}
-H'(x) =\Theta_{xxx}+3\Theta_{yxx}\xi+3\Theta_{yyx}\xi^2+\Theta_{yyy}\xi^3,\\
-H(x) =\Theta_{xx}+2 \xi \Theta_{xy}+\xi^2 \Theta_{yy},
\end{gather*}
so that
\begin{gather*}
\Theta(x)=-\iint H(x) \der x \der x
\end{gather*}
will satisfy the condition. We have $\Theta_{xx}=-H(x)$.
We have the following theorem.

\begin{Theorem}
The An--Nurowski twistor distribution $\mathcal{D}$ on the circle twistor bundle ${\mathbb T}(M^4) \to M^4$ of $(M^4,g)$ with the Pleba\'nski metric
\begin{gather*}
g=\der w \der x+\der z \der y+H(x) \der z^2
\end{gather*}
and the function $H(x)$ has split $G_2$ as its group of local symmetries
provided that $H(x)$ is one of the following up to fractional linear transformations in $s$:
\begin{enumerate}\itemsep=0pt
\item[$1.$]
The function $H(x)$ is given by
\begin{gather*}
H(x)=\iint \frac{\dot{s}^2}{s^{\frac{4}{3}}(s-1)^{\frac{4}{3}}} \der x \der x,
\end{gather*}
where $s(x)$ is a solution to the $3^{\text{rd}}$ order ODE \eqref{schwarzian}
\begin{gather*}
\{s,x\}+\frac{\dot{s}^2}{2}V(s)=0
\end{gather*}
with $(\alpha,\beta,\gamma)$ given by either
$\big(\frac{4}{3}, \frac{4}{3}, \frac{4}{3}\big)$ or $\big(\frac{4}{3}, \frac{1}{3},\frac{1}{3}\big)$.
\item[$2.$]
The function $H(x)$ is given by
\begin{gather*}
H(x)=\iint \frac{\dot{s}^2}{s^{\frac{4}{3}}(s-1)} \der x \der x,
\end{gather*}
where $s(x)$ is a solution to \eqref{schwarzian}
with $(\alpha,\beta,\gamma)$ one of $\big(\frac{2}{3}, \frac{1}{2}, \frac{1}{3}\big)$, $
\big(\frac{2}{3}, \frac{1}{2},\frac{4}{3}\big)$ or
$\big(\frac{2}{3}, 2,\frac{1}{3}\big)$.
\item[$3.$]
The function $H(x)$ is given by
\begin{gather*}
H(x)=\iint \frac{\dot{s}^2}{s^{\frac{5}{3}}(s-1)^{\frac{5}{3}}} \der x \der x,
\end{gather*}
where $s(x)$ is a solution to \eqref{schwarzian}
with $(\alpha,\beta,\gamma)$ either $\big(\frac{8}{3}, \frac{2}{3}, \frac{2}{3}\big)$ or $
\big(\frac{2}{3}, \frac{2}{3},\frac{2}{3}\big)$.

\item[$4.$]
The function $H(x)$ is given by $H(x)=x^m$ where $m \in \big\{{-}2,-\frac{1}{2},\frac{1}{2},2\big\}$.
\item[$5.$]
The function $H(x)$ is given by
\begin{gather*}
H(x)=-\frac{1}{192}\frac{\sqrt{x+C}(4x+3B+C)}{\sqrt{x+B}(B-C)^3}.
\end{gather*}
This corresponds to the solution obtained from \eqref{nsol}.

\item[$6.$] The function $H(x)$ is the Legendre transform of the function $F(q)$ with $q=H'(x)$ and
\begin{gather*}
H(x)=qx-F(q)=xH'(x)-F(H'(x)).
\end{gather*}
In this case $F(q)$ can be given by one of the following:
\begin{gather*}
(a) \quad F(q)=\iint \frac{\dot{s}^{\frac{3}{2}}}{s(s-1)}\der q \der q
\end{gather*}
where $s(q)$ is again a solution to the $3^{\text{rd}}$ order ODE \eqref{schwarzian} with $(\alpha,\beta,\gamma)$ one of $(3,3,3)$ or
$\big(3, \frac{1}{3},\frac{1}{3}\big)$.
\begin{gather*}
(b) \quad F(q)=\iint \frac{\dot{s}^{\frac{3}{2}}}{s(s-1)^{\frac{3}{4}}}\der q \der q,
\end{gather*}
where $s(q)$ is a solution to \eqref{schwarzian}
with $(\alpha,\beta,\gamma)$ one of
$\big(\frac{3}{2}, \frac{1}{3}, \frac{1}{2}\big)$, $
\big(\frac{3}{2}, 3,\frac{1}{2}\big)$ or
$\big(\frac{3}{2}, \frac{1}{3},\frac{9}{2}\big)$.
\begin{gather*}
(c) \quad F(q)=\iint \frac{\dot{s}^{\frac{3}{2}}}{s^\frac{5}{4}(s-1)^{\frac{5}{4}}}\der q \der q,
\end{gather*}
where $s(q)$ is a solution to \eqref{schwarzian}
with $(\alpha,\beta,\gamma)$ one of $\big(6,\frac{3}{2}, \frac{3}{2}\big)$ or $\big(\frac{2}{3}, \frac{3}{2},\frac{3}{2}\big)$.
\begin{gather*}
(d) \quad F(q)=q^m,
\end{gather*} where $m \in \big\{{-}1,\frac{1}{3},\frac{2}{3},2\big\}$.
\begin{gather*}
(e) \quad F(q)=-\frac{1}{6}\frac{(q+B)^{\frac{1}{3}}(q+C)^{\frac{2}{3}}}{(B-C)^2},
\end{gather*}
again corresponding to the solution~\eqref{nsol}.
\end{enumerate}
\end{Theorem}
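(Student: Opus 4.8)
The plan is to identify the An--Nurowski twistor distribution of the Pleba\'nski metric $g=\der w\der x+\der z\der y+H(x)\der z^2$ with the Legendre transform of a distribution of the form $\mathcal{D}_{F(q)}$, and then to read off the admissible $H(x)$ from the solutions of the dual generalised Chazy equation~\eqref{gc2} already produced in Sections~\ref{dode}, \ref{fos} and~\ref{ltc}. First I would specialise the construction of Section~\ref{anexamples} to a second heavenly function $\Theta$ depending on~$x$ alone. Then $\Theta_{yy}=\Theta_{xy}=\Theta_{yxx}=\Theta_{yyx}=\Theta_{yyy}=0$, the Pleba\'nski metric collapses (up to an overall constant, hence within its conformal class) to $g=\der w\der x+\der z\der y-\Theta_{xx}\der z^2$, and the functions $A$, $B$ reduce to $A=\Theta_{xxx}$, $B=\Theta_{xx}$. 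Choosing $\Theta(x)=-\iint H(x)\der x\der x$, i.e.\ $\Theta_{xx}=-H(x)$, recovers the stated metric and turns the annihilating $1$-forms $\tilde\omega^3,\tilde\omega^4,\tilde\omega^5$ of the twistor distribution into exactly the Legendre-transformed $1$-forms of Section~\ref{ltc} with $t=x$. Hence the twistor distribution on ${\mathbb T}(M^4)$ is the one analysed there, with adapted coframe~\eqref{lgco}.

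Next I would invoke the flatness criterion. By the An--Nurowski construction the twistor distribution is a genuine $(2,3,5)$-distribution wherever the self-dual Weyl tensor of $g$ is nonzero, and by Cartan's theorem (via Nurowski's conformal class) it has the split real form of $G_2$ as its algebra of local symmetries precisely when the fundamental curvature invariant vanishes; by the computation preceding Proposition~\ref{dualchazy} this invariant is the left-hand side of the $6^{\rm th}$ order ODE~\eqref{ds6}. Thus the theorem is equivalent to the statement that $H(x)$ solves~\eqref{ds6}. Through the substitutions ${\rm e}^{p}=H^{(3)}$, $v=p'$, $u=\frac{3}{2} v$ of Section~\ref{dode}, this is in turn equivalent to $u$ solving the dual generalised Chazy equation~\eqref{gc2} with Chazy parameter $k=\pm\frac{3}{2}$, supplemented by the leading-order power solutions $H=x^m$ with $m\in\{-2,-\frac{1}{2},\frac{1}{2},2\}$ and by Chazy's explicit solution~\eqref{nsol}.

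It then remains to match each listed $H(x)$ to such a solution. Items~1--3 come from the three parametrisations~\eqref{yes}, \eqref{schwarz}, \eqref{y411} of the Chazy solution applied to~\eqref{gc2} with $k=\frac{3}{2}$ and fed through $H(x)=\iint{\rm e}^{\int\frac{2u}{3}\der x}\der x\der x$; this produces precisely the integrands $\dot s^2/(s^a(s-1)^b)$ and the triples $(\alpha,\beta,\gamma)$ listed at the end of Section~\ref{ltc}. Item~4 is just the list of explicit power solutions of~\eqref{ds6} recorded in Section~\ref{dode}. For item~5 I would take Chazy's solution~\eqref{nsol} with $k=\frac{3}{2}$, namely $u(x)=-\frac{15}{4(x-a)}-\frac{9}{4(x-b)}$; the algebraic-curve computation already done in Section~\ref{dode} shows this corresponds to $H=y'$ with $(y+f)^2=(x-a)(x-b)^3$, so that $H''$ is a constant multiple of $(x-a)^{-5/2}(x-b)^{-3/2}$ and, modulo an additive linear term, $H=\frac{1}{2}(x-a)^{-1/2}(x-b)^{1/2}(4x-3a-b)$; the identifications $a=-B$, $b=-C$ together with a constant rescaling give the stated closed form. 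Finally, item~6 follows from Proposition~\ref{leg1} and Lemma~\ref{legendre}: $H(x)=xH'(x)-F(H'(x))$ solves~\eqref{ds6} iff $F(q)$ solves~\eqref{6thode}, and sub-cases (a)--(c) are exactly the solutions of~\eqref{6thode} built in Theorems~\ref{flat235s}, \ref{flat235j} and~\ref{flat235k} (the parametrisations~\eqref{yes}, \eqref{schwarz}, \eqref{y411} of~\eqref{3rds} with $k=\frac{2}{3}$), (d) is $F=q^m$ with $m\in\{-1,\frac{1}{3},\frac{2}{3},2\}$, and (e) is Chazy's solution~\eqref{nsol} with $k=\frac{2}{3}$ integrated twice.

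The only substantive point beyond bookkeeping is the reduction in the first two paragraphs --- carefully pushing the coordinate changes $x\mapsto\tilde t$, $w\mapsto\tilde y$, $z\mapsto\tilde x$, $-\xi\mapsto\tilde p$, $y\mapsto\tilde z-\tilde p\tilde t$ through the An--Nurowski construction for $\Theta=\Theta(x)$, confirming that the twistor $1$-forms coincide with the coframe~\eqref{lgco}, and that the single surviving Weyl component is the left-hand side of~\eqref{ds6}. Most of this is already carried out in Section~\ref{anexamples}; what remains is to assemble it and to verify the closed-form expressions in items~5 and~6(e) by the double integration indicated above. Everything else is a matter of collecting solutions of~\eqref{ds6} and~\eqref{6thode} --- together with the matching $(\alpha,\beta,\gamma)$ data --- that have already appeared in Sections~\ref{dode}, \ref{fos}, \ref{ltc} and in Theorems~\ref{flat235s}--\ref{flat235k}.
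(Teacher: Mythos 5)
Your proposal is correct and follows essentially the same route as the paper: Section~\ref{anexamples} likewise reduces the twistor distribution of the Pleba\'nski metric with $\Theta=\Theta(x)$, $\Theta_{xx}=-H(x)$, to the Legendre-transformed coframe of Section~\ref{ltc}, so that flatness is equivalent to $H$ solving~\eqref{ds6}, and then the listed cases are exactly the solutions assembled from Sections~\ref{dode}, \ref{fos}, \ref{examples} and the Chazy solution~\eqref{nsol}. Your extra verification of the closed forms in items~5 and~6(e) by double integration of $H''=(x-a)^{-5/2}(x-b)^{-3/2}$ (using homogeneity of~\eqref{ds6} to absorb constants) is consistent with what the paper states without detail.
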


\subsection*{Acknowledgements}
This work is inspired by the paper of~\cite{annur}. The author would like to thank Daniel An, Pawe{\l} Nurowski, Travis Willse and the anonymous referees for comments. Part of this work is supported by the Grant agency of the Czech Republic P201/12/G028.

\pdfbookmark[1]{References}{ref}
\LastPageEnding

\end{document}